\date{}
\newcommand{\R}{\mathbb{R}}
\newcommand{\Z}{\mathbb{Z}}
\newcommand{\F}{\mathbb{F}}
\newcommand{\Q}{\mathbb{Q}}
\newcommand{\C}{\mathbb{C}}
\newcommand{\PG}{\textnormal{PG}}
\newcommand{\Spec}{\textnormal{Spec}}
\newcommand{\Cay}{\textnormal{Cay}}
\newcommand{\Irr}{\textnormal{Irr}}
\newcommand{\Supp}{\textnormal{supp}}
\newcommand{\abs}[1]{|#1|}
\newtheorem{theorem}{Theorem}[section]
\newtheorem{corollary}[theorem]{Corollary}
\newtheorem{lemma}[theorem]{Lemma}
\theoremstyle{definition}
\newtheorem{remark}{Remark}
\title{Continuous-time Quantum Walks on Cayley Graphs of Extraspecial Groups}
\author{Peter Sin\footnote{Research partially supported by a grant from the Simons Foundation (\#633214 to Peter Sin).}, Julien Sorci}
\begin{document}
\maketitle

\begin{abstract}
	We study continuous-time quantum walks on normal Cayley graphs of certain non-abelian groups called extraspecial groups. By applying general results for graphs in association schemes we determine the precise conditions for perfect state transfer and fractional revival. Using partial spreads, we construct Cayley graphs on extraspecial $2$-groups that admit these  phenomena. We also show that there is no normal Cayley graph of an extraspecial group that admits instantaneous uniform mixing.
\end{abstract}

\section{Introduction}
\unmarkedfntext{2010 Mathematics Subject Classification. Primary 05C25; Secondary 81Q35 \newline\hspace*{2em}\textit{Key words and phrases}: Cayley graph, perfect state transfer, fractional revival, uniform mixing, extraspecial groups}
Analogous to a classical walk on a graph, a \textit{quantum walk} on a graph models the evolution of quantum states from interacting qubits in a quantum computer. The walk can be defined by a unitary matrix acting on the complex vector space with basis labeled by the vertices of the graph. If $X$ is a simple, undirected, loopless graph on $n$ vertices with adjacency matrix $A$, then the operator 
\[ U_A(t) := \sum_{k=0}^\infty \frac{(it)^k}{k!} A^k \]
acting on $\C^n$ defines a continuous-time quantum walk on the graph $X$ and is called the \textit{transition matrix} of the walk. There are several state transfer phenomena of interest related to the walk. We say that the graph $X$ admits \textit{perfect state transfer} from a vertex $u$ to a vertex $v$ at time $\tau \in \R^+$ with phase $\lambda \in \C$ if $u$ and $v$ are distinct and $U_A(\tau) e_u =\lambda e_v$. We say that $X$ admits \textit{fractional revival} from $u$ to $v$ at time $\tau \in \R^+$ if $u$ and $v$ are distinct and
\[ U_A(\tau)e_u= \alpha e_u + \beta e_v \]
for some $\alpha, \beta \in \C$ with $|\alpha|^2 + |\beta|^2 =1$, $\beta \neq 0$. Note that perfect state transfer is a special case of fractional revival when $\alpha=0$. Lastly, we say that $X$ admits \textit{instantaneous uniform mixing} at time $\tau \in \R^+$ if
\[ |U_A(\tau)|_{u,v} = \frac{1}{\sqrt{n}} \]
for all vertices $u$ and $v$ of $X$. Finding families of graphs whose quantum walks have these various properties is still an active area of research, and the current results suggest that they rarely occur \cite{CJLSYZ,GMR}. For example, Godsil showed that for each positive integer $k$ there are only finitely many connected graphs with maximum degree at most $k$ which admit perfect state transfer \cite[Corollary 10.2]{G}.
	
Owing in part to their well-understood spectra, one family of graphs which has received attention are Cayley graphs of finite groups. There are several papers on perfect state transfer in Cayley graphs of abelian groups. In \cite{B}, Ba\v{s}i\'{c} studied Cayley graphs of cyclic groups and gave necessary and sufficient conditions for perfect state transfer. Bernasconi et al. and Cheung et al. studied perfect state transfer in cubelike graphs \cite{BGS, CG}. Tan et al. generalized this work by studying Cayley graphs of arbitrary abelian groups in \cite{TFC}, and provided conditions for perfect state transfer in terms of the spectrum. 
	
While much work has been done for Cayley graphs of abelian groups, there are few results for non-abelian groups. Gerhardt and Watrous \cite{GW} considered Cayley graphs of symmetric groups. Cao and Feng considered normal Cayley graphs of dihedral groups and showed that such graphs must be integral and the order of the group must be even for perfect state transfer to occur \cite{CF2}. We will extend the theory of quantum walks on Cayley graphs by considering normal Cayley graphs of certain non-abelian groups, called extraspecial $2$-groups. We will primarily view a Cayley graph of a finite group as a graph in a particular association scheme, and use the existing literature to examine the quantum walks on these graphs. An important result of Chan et al. \cite{CCTVZ} showed that a necessary condition for perfect state transfer is for the group to contain a central involution. This motivates our choice of studying normal Cayley graphs of extraspecial $2$-groups, since these groups contain a unique central involution.
	
In section 2 we will present some preliminary results concerning perfect state transfer on graphs in association schemes, and we will determine when vertices in a normal Cayley graph are strongly cospectral. In section 3 we will provide the necessary background on extraspecial $p$-groups and their characters, and show that their normal Cayley graphs are integral. In section 4 we give a complete characterization of perfect state transfer in these graphs and use partial spreads to construct explicit examples. In section 5 we give precise conditions for fractional revival. In section 6 we prove that there is no normal Cayley graph of an extraspecial $p$-group which admits instantaneous uniform mixing. Lastly, in section 7 we give some concluding remarks and open questions.

\section{Preliminaries}

A commonly used method for examining the transition matrix $U_A(t)$ is to employ the \textit{spectral decomposition} of $A$, which we will now describe. If $\theta_0,...,\theta_r$ are the distinct eigenvalues of $A$ then the spectral decomposition of $A$ is $A=\sum_{i=0}^r \theta_i \tilde{E}_i$ where $\tilde{E}_i$ is the projection onto the eigenspace $\mathcal E(\theta_i)$. The projections $\tilde{E}_i$ are orthogonal idempotents, meaning that they satisfy
\begin{enumerate}[label=(\roman*)]
	\item $\tilde{E}_j^2 = \tilde{E}_j$ for all $j=0,...,r$.
	\item $\tilde{E}_i\tilde{E}_j=0$ whenever $i \neq j$.
	\item $\sum_{j=0}^r\tilde{E}_j=I$.
\end{enumerate}
Using these properties the transition matrix $U_A(t)$ can be expressed as
\[ U_A(t) = \sum_{j=0}^r e^{it\theta_j}\tilde{E}_j. \] 
There are some sets related to the spectral decomposition of $A$ which will be important for understanding perfect state transfer. The set $\Phi_u:=\{\theta_i : \tilde{E}_i e_u \neq 0 \}$ is called the \textit{eigenvalue support} of the vertex $u$. We say that vertices $u$ and $v$ are \textit{strongly cospectral} if 
\[ \tilde{E}_i e_u = \pm \tilde{E}_i e_v, \] 
for all $i=0,...,r$, and when $u$ and $v$ are strongly cospectral we define the sets
\[ \Phi_{uv}^+ := \{\theta_i : \tilde{E}_i e_u = \tilde{E}_i e_v \neq 0\}, \]
\[ \Phi_{uv}^- := \{ \theta_i : \tilde{E}_i e_u = - \tilde{E}_i e_v \neq 0 \}. \]
The sets $\Phi_{uv}^+$ and $\Phi_{uv}^-$ are disjoint by definition. The adjacency matrix of a normal Cayley graph enjoys certain algebraic properties, which we now describe. Let $G$ be a finite group with conjugacy classes $K_0 =\{1\}, K_1,...,K_d$, and let $R_i \subseteq G \times G$ be the relation $R_i = \{(g,h) \in G \times G : gh^{-1} \in K_i \}$. If $A_i$ denotes the adjacency matrix of the relation $R_i$, then the set of matrices $\mathcal A=\{A_0,A_1,...,A_d\}$ forms an \textit{association scheme}. In general this association scheme is symmetric if and only if the conjugacy classes of $G$ are closed under taking inverses. For an introduction to association schemes, see \cite{D}. Throughout the paper we will only consider \textit{normal} Cayley graphs, meaning that $S \subseteq G \setminus \{1\}$ will be a union of conjugacy classes, say $S=\cup_{i \in I}K_i$, where $I \subseteq \{1,2,...,d\}$. We will also assume that $S$ is closed under taking inverses, $S$ generates the group $G$, and $S \neq G \setminus \{1\}$. Then the normal Cayley graph $\Cay(G,S)$ is a connected, undirected graph in the association scheme $\mathcal A$ which is not the complete graph, and with adjacency matrix $A= \sum_{i \in I}A_i$. 
	
Coutinho et al. gave the following result for graphs in association schemes which admit perfect state transfer \cite[Theorem 4.1]{CGGV}. It first appeared in \cite[Theorem 4.1]{G}.
	 	 
\begin{theorem}\label{GCGV}
	Let $X$ be a graph belonging to an association scheme $\mathcal A= \{A_0, A_1,...,A_d\}$ and with adjacency matrix $A = \sum_{i \in I}A_i$ for some $I \subseteq \{0,1,...,d\}$. If $X$ admits perfect state transfer at time $\tau$, then there is a permutation matrix $T$ with no fixed points and of order two such that $U_A(\tau)=\lambda T$ for some $\lambda \in \C$. Moreover, $T$ is a class in the scheme. 
\end{theorem}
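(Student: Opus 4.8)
The plan is to exploit the fact that $X$ lies in an association scheme, so its adjacency matrix $A$ is a polynomial in the Bose--Mesner algebra and, more importantly, the spectral idempotents $\tilde E_j$ of $A$ are sums of the primitive idempotents $E_0,\dots,E_d$ of the scheme. Consequently $U_A(\tau) = \sum_j e^{i\tau\theta_j}\tilde E_j$ lies in the Bose--Mesner algebra, hence $U_A(\tau) = \sum_{k=0}^d c_k A_k$ for some scalars $c_k \in \C$. The goal is to show that exactly one coefficient is nonzero, that the corresponding $A_k$ is a permutation matrix of order two with no fixed points, and that the scalar has modulus one.

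**Extracting the permutation structure.** First I would recall the standard consequence of perfect state transfer: if $X$ admits perfect state transfer from $u$ to $v$ at time $\tau$ with phase $\lambda$, then by unitarity and symmetry of $A$ one gets $U_A(\tau)e_u = \lambda e_v$ and $U_A(\tau)e_v = \lambda e_u$, so $U_A(2\tau)e_u = \lambda^2 e_u$, and in fact $|\lambda|=1$. The key leverage from the association scheme is vertex-transitivity of the scheme's automorphism structure together with the fact that $U_A(\tau)$ is in the Bose--Mesner algebra: since $U_A(\tau) = \sum_k c_k A_k$ and the $A_k$ are $0/1$ matrices with disjoint supports whose rows each have constant sum, the condition that one row (the $u$-th) equals $\lambda e_v$ forces, for every other row $w$, that $U_A(\tau)e_w = \lambda e_{w'}$ where $w'$ is the unique vertex with $(w,w')$ in the same relation $R_k$ as $(u,v)$. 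So only the single coefficient $c_k$ with $(u,v)\in R_k$ is nonzero, $c_k = \lambda$, and the map $w \mapsto w'$ is the permutation realized by $A_k$, giving $U_A(\tau) = \lambda A_k =: \lambda T$.

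**Finishing.** It remains to check that $T = A_k$ is a permutation matrix that is an involution with no fixed points. No fixed points is immediate since $k \neq 0$ (as $u \neq v$, so $R_k$ is not the diagonal relation), and $A_k$ being a permutation matrix forces every row and column sum to be $1$, i.e. the valency $v_k = 1$, so the relation $R_k$ is a perfect matching; being symmetric (the scheme coming from a normal Cayley graph is symmetric, or directly because $A$ is symmetric forces $U_A(\tau)^{\mathsf T}$ to also be $\lambda A_k$) it is its own inverse, hence $T^2 = I$, an involution. The last sentence of the theorem---that $T$ is a class in the scheme---is then just the statement $T = A_k \in \mathcal A$, which we have already obtained.

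**Main obstacle.** The step I expect to require the most care is the deduction that \emph{only one} coefficient $c_k$ is nonzero and that $c_k$ has modulus $1$: a priori $U_A(\tau) = \sum_k c_k A_k$ could spread mass across several relations, and one must use perfect state transfer at a single pair $(u,v)$ together with the rigidity of the $0/1$ structure of the $A_k$ (constant row sums, disjoint supports) and the unitarity of $U_A(\tau)$ to rule this out. Concretely, $|c_k| \le 1$ for all $k$ with $\sum_k v_k |c_k|^2 = 1$ from unitarity, while perfect state transfer gives $|c_{k_0}| = 1$ for the relation $k_0$ containing $(u,v)$; combining forces $v_{k_0} = 1$ and $c_k = 0$ for $k \neq k_0$. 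Making this bookkeeping clean is the crux; everything else is formal manipulation in the Bose--Mesner algebra.
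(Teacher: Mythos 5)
Your proof is correct. Note that the paper does not prove this statement itself --- it is quoted from Coutinho--Godsil--Guo--Vanhove (and originally Godsil) --- but your argument is essentially the standard one from those sources: since $U_A(\tau)$ lies in the Bose--Mesner algebra you may write $U_A(\tau)=\sum_k c_k A_k$, and because the $A_k$ are $0/1$ matrices with disjoint supports the column of $U_A(\tau)$ indexed by $u$ has, for each $k$, exactly $v_k$ entries equal to $c_k$; the perfect state transfer condition $U_A(\tau)e_u=\lambda e_v$ with $|\lambda|=1$ (from unitarity) then forces $c_k=0$ for all $k$ except the class $k_0$ containing $(v,u)$, forces $v_{k_0}=1$, and symmetry of $A$ (hence of $U_A(\tau)=\lambda A_{k_0}$) makes the permutation matrix $A_{k_0}$ a fixed-point-free involution. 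Your ``main obstacle'' is in fact already dispatched by the disjoint-support/column argument alone, so the additional unitarity bookkeeping $\sum_k v_k|c_k|^2=1$, while valid, is not needed.
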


Theorem~\ref{GCGV} tells us that if the normal Cayley graph $\Cay(G,S)$ admits perfect state transfer, then there is some $i \in I$ for which $A_i$ is a permutation matrix of order $2$ with no fixed points. Therefore a necessary condition for perfect state transfer is that $G$ contains a central involution $z$. The matrix $T=A_i$ is the regular representation of the element $z$, and when perfect state transfer does occur it is from a vertex $g$ to $gz$, where $g \in G$.
	
The spectral decomposition of the adjacency matrix $A$
of $\Cay(G,S)$ can be described using the complex characters of $G$. We identify
the underlying vector space of the group algebra $\C G$ with the space of column vectors
on which $A$ acts, so that the characteristic vectors $e_x$, for $x\in G$, form the standard basis. The group $G$ also acts on $\C G$ through its left regular representation $L$.  For $g\in G$ the matrix $L(g)$ maps a basis vector $e_x$ to $e_{gx}$. 

Let $\Irr(G)$ denote the set of irreducible complex characters of $G$.
For $\chi\in\Irr(G)$ we let $\chi(S)=\sum_{s\in S}\chi(s)$. Then each  eigenvalue of $A$ is of the form  $\theta=\chi(S)/\chi(1)$
for some $\chi \in \Irr(G)$ and, conversely, each $\chi$ gives an eigenvalue.
Let $X_\theta= \{ \chi \in \Irr(G) : \chi(S)/\chi(1)=\theta \}$. Then each
$\chi\in X_\theta$ contributes $\chi(1)^2$ to the multiplicity of $\theta$.
For each eigenvalue $\theta$, let $\mathcal E(\theta)$ denote its eigenspace
and let $\tilde{E}_\theta$ denote the projection onto $\mathcal E(\theta)$.
Then we have the spectral decomposition $A= \sum_{\theta \in \Spec(A)} \theta \tilde{E}_\theta$.  The association scheme $\mathcal A$ of $G$ generates a semisimple algebra,
isomorphic to the center of the group algebra, which has an  basis of orthogonal idempotents
\begin{equation}\label{charidem}
	 E_\chi := \frac{\chi(1)}{|G|} \sum_{g \in G}\chi(g^{-1})L(g), \ \ \chi \in \Irr(G).
\end{equation}	
The columns of the idempotents $E_\chi$ are eigenvectors of $A$, since $AE_\chi =\frac{\chi(S)}{\chi(1)}E_\chi$.  Then the projection $\tilde{E}_\theta$ can be written as
\begin{equation}\label{projidem}
	\tilde{E}_\theta = \sum_{\chi \in X_\theta}E_\chi,
\end{equation}
	so that the spectral decomposition of $A$ is 
\begin{equation}\label{specdecomp}
	A = \sum_{\theta \in \Spec(A)} \theta \sum_{\chi \in X_\theta}E_\chi .
\end{equation}	
From (\ref{specdecomp}) we can show that the eigenvalue support of a vertex in $\Cay(G,S)$ contains each eigenvalue of the graph.

\begin{theorem}\label{eigensupport}
The eigenvalue support $\Phi_g$ of a vertex $g \in G$ in the graph $X=\Cay(G,S)$ contains each eigenvalue of $X$.
\end{theorem}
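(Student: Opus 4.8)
The plan is to show directly that for every eigenvalue $\theta$ of $A$ the vector $\tilde E_\theta e_g$ has a nonzero coordinate, which by the definition $\Phi_g=\{\theta:\tilde E_\theta e_g\neq 0\}$ gives $\theta\in\Phi_g$.

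First I would combine the expression \eqref{projidem} for $\tilde E_\theta$ with the formula \eqref{charidem} for the central idempotents and compute $E_\chi e_g$. Since $L(h)e_g=e_{hg}$, we get $E_\chi e_g=\frac{\chi(1)}{|G|}\sum_{h\in G}\chi(h^{-1})e_{hg}$, and reindexing via $x=hg$ this becomes $\frac{\chi(1)}{|G|}\sum_{x\in G}\chi(gx^{-1})e_x$. In particular the coefficient of $e_g$ (the term $x=g$) in $E_\chi e_g$ is $\chi(1)\chi(1)/|G|=\chi(1)^2/|G|$. Summing over $\chi\in X_\theta$ as in \eqref{projidem}, the coefficient of $e_g$ in $\tilde E_\theta e_g$ equals $\frac{1}{|G|}\sum_{\chi\in X_\theta}\chi(1)^2$.

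By the description of the spectrum given just before the statement, every eigenvalue $\theta$ of $A$ is of the form $\chi(S)/\chi(1)$ for at least one $\chi\in\Irr(G)$, so $X_\theta$ is nonempty; hence the coefficient just computed is a strictly positive real number, and therefore $\tilde E_\theta e_g\neq 0$. Since $\theta$ was an arbitrary eigenvalue of $X$, this proves $\Phi_g$ contains every eigenvalue. There is essentially no obstacle here: the only points needing care are the bookkeeping with the convention $L(h)e_g=e_{hg}$ when isolating the $e_g$-coordinate, and the observation that $X_\theta\neq\emptyset$ for each eigenvalue. Alternatively, one could note that $\tilde E_\theta$ is an orthogonal projection, so its $(g,g)$ diagonal entry equals $\|\tilde E_\theta e_g\|^2$, and compute that diagonal entry from \eqref{charidem} to reach the same positivity conclusion.
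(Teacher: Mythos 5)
Your argument is correct and is essentially identical to the paper's proof: both compute the $g$-th entry of $\tilde E_\theta e_g$ via the character idempotent formula and observe it equals $\frac{1}{|G|}\sum_{\chi\in X_\theta}\chi(1)^2>0$. The reindexing and the nonemptiness of $X_\theta$ are handled exactly as in the paper, so there is nothing to add.
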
 

\begin{proof}
Let $\theta \in \Spec(X)$. We will show that the column of $\tilde{E}_\theta$ labeled by $g \in G$ is nonzero. Using (\ref{charidem}) and (\ref{projidem}),  the $g^{th}$ column of $\sum_{\chi \in X_\theta}E_\chi$ is:
\begin{equation*}
\begin{split}
	\sum_{\chi \in X_\theta}E_\chi e_g &= \sum_{\chi \in X_\theta}\frac{\chi(1)}{|G|} \sum_{h \in G} \chi(h^{-1})e_{hg} \\
	&= \sum_{\chi \in X_\theta}\frac{\chi(1)}{|G|} \sum_{x \in G} \chi(gx^{-1})e_x. \\
\end{split}
\end{equation*}
The $g^{th}$ entry in this column is then $\sum_{\chi \in X_\theta}\frac{\chi(1)^2}{|G|}$, which is nonzero.
\end{proof}

While Theorem~\ref{GCGV} showed that if a normal Cayley graph admits perfect state transfer then transfer occurs between group elements of the form $g,gz$, for $g \in G$ and $z$ a central involution of $G$, we can alternatively use (\ref{specdecomp}) to show that if distinct vertices $g$ and $h$ in a normal Cayley graph $\Cay(G,S)$ are strongly cospectral, then $gh^{-1}$ must be a central involution.

\begin{theorem}\label{stronglycospectralcentralinvolution}
If distinct vertices $g,h \in G$ of the normal Cayley graph $X=\Cay(G,S)$ are strongly cospectral, then $gh^{-1}$ is a central involution of $G$.
\end{theorem}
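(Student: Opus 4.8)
The plan is to exploit the explicit description of the spectral idempotents $\tilde E_\theta$ in terms of the character-theoretic idempotents $E_\chi$ from~\eqref{charidem} and~\eqref{projidem}. Strong cospectrality of $g$ and $h$ means $\tilde E_\theta e_g = \pm\tilde E_\theta e_h$ for every eigenvalue $\theta$. First I would write out, as in the proof of Theorem~\ref{eigensupport}, the $x$-entry of $\tilde E_\theta e_g$, namely $\sum_{\chi\in X_\theta}\frac{\chi(1)}{|G|}\chi(gx^{-1})$, and compare it with the corresponding entry of $\tilde E_\theta e_h$, which is $\sum_{\chi\in X_\theta}\frac{\chi(1)}{|G|}\chi(hx^{-1})$. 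The sign $\varepsilon_\theta\in\{+1,-1\}$ attached to $\theta$ must therefore satisfy $\sum_{\chi\in X_\theta}\chi(1)\chi(gx^{-1}) = \varepsilon_\theta\sum_{\chi\in X_\theta}\chi(1)\chi(hx^{-1})$ for all $x\in G$.

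Next I would sum these identities over all eigenvalues $\theta$, but weighted so as to isolate a single $\chi$: since the $E_\chi$ are linearly independent and the columns $E_\chi e_g$ are the building blocks of $\tilde E_\theta e_g$, strong cospectrality forces, for each $\chi\in\Irr(G)$, that $E_\chi e_g = \varepsilon_{\theta(\chi)} E_\chi e_h$ where $\theta(\chi)=\chi(S)/\chi(1)$. Reading off the identity element's coordinate (or better, reading off an arbitrary coordinate) this says $\chi(gx^{-1}) = \varepsilon_\chi\,\chi(hx^{-1})$ for all $x\in G$ and all $\chi$, i.e. $\chi(gy) = \varepsilon_\chi\chi(hy)$ for all $y\in G$ and all irreducible $\chi$. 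Taking $y$ over a transversal and using that the irreducible characters span the space of class functions, this is equivalent to the statement that left multiplication by $gh^{-1}$ acts on each isotypic component of $\C G$ as a scalar $\varepsilon_\chi$; equivalently, in the regular representation, $L(gh^{-1})$ is central in the group algebra, which forces $gh^{-1}\in Z(G)$.

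To finish, I would pin down that $gh^{-1}$ is an involution. Once $z:=gh^{-1}$ is central, $L(z)$ acts on the $\chi$-isotypic component as the scalar $\chi(z)/\chi(1)=\varepsilon_\chi=\pm1$; since $z$ is not the identity (as $g\neq h$), at least one $\varepsilon_\chi=-1$, and on that component $L(z)$ has order $2$, hence $z^2$ acts trivially on a faithful-enough collection of components. More cleanly: $\varepsilon_\chi^2 = \chi(z^2)/\chi(1)$ for all $\chi$ forces $\chi(z^2)=\chi(1)$ for all $\chi$, so $z^2$ lies in the kernel of every irreducible representation, hence $z^2=1$. Combined with $z\neq 1$, this shows $z=gh^{-1}$ is a central involution.

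The main obstacle is the bookkeeping in the first two steps: passing rigorously from "$\tilde E_\theta e_g=\pm\tilde E_\theta e_h$ for all $\theta$" to "$E_\chi e_g=\pm E_\chi e_h$ for all $\chi$", and then to the pointwise character identity $\chi(gy)=\varepsilon_\chi\chi(hy)$. The subtlety is that several inequivalent $\chi$ may share an eigenvalue $\theta$ (so $X_\theta$ need not be a singleton), and one must use the orthogonality/linear independence of the $E_\chi$ — equivalently, that $\{E_\chi\}$ is a basis of the center of $\C G$ — to separate their contributions before the sign condition can be transferred to individual characters. After that, recognizing the resulting relation as "$L(gh^{-1})$ is central" and invoking that an element acting centrally in the regular representation lies in $Z(G)$ is routine.
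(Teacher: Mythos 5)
Your proposal is correct, but it handles the crucial middle step differently from the paper. Both arguments start from the same place (the explicit columns of the idempotents $E_\chi$) and end at the same place ($\chi(gh^{-1})=\pm\chi(1)$ for every $\chi\in\Irr(G)$, hence $gh^{-1}$ acts as $\pm I$ in every irreducible representation and is a central involution). The difference is how one passes from the relation $\sum_{\chi\in X_\theta}E_\chi e_g=\varepsilon_\theta\sum_{\chi\in X_\theta}E_\chi e_h$, which mixes all characters sharing the eigenvalue $\theta$, to a statement about each individual $\chi$. You first decouple the characters: since $E_\psi E_\chi=\delta_{\psi\chi}E_\chi$, multiplying the relation by $E_\psi$ gives $E_\psi e_g=\varepsilon_\theta E_\psi e_h$ for each $\psi\in X_\theta$, and reading off entries yields $\chi(gh^{-1}w)=\varepsilon_\chi\chi(w)$ for all $w$. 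The paper instead evaluates only the $h$-entry of the mixed relation, obtaining $\sum_{\chi\in X_\theta}\chi(1)\chi(gh^{-1})=\sum_{\chi\in X_\theta}\chi(1)^2$, and uses the triangle inequality $\abs{\chi(gh^{-1})}\le\chi(1)$ to force $\chi(gh^{-1})=\chi(1)$ term by term; this extremal trick makes the decoupling unnecessary. Your route is slightly longer but yields a stronger intermediate statement ($E_\chi e_g=\pm E_\chi e_h$ for each single $\chi$), which is essentially what the paper's Theorem~\ref{stronglycospectral} and Remark~\ref{eigpartitions} establish later anyway. One small repair: to justify that $gh^{-1}$ acts as the scalar $\varepsilon_\chi$ on the $\chi$-isotypic component, the relevant fact is not that the irreducible characters span the class functions, but rather that $\abs{\chi(gh^{-1})}=\chi(1)$ forces the representing matrix to be scalar (equivalently, Burnside: $\varphi(G)$ spans the full matrix algebra, and $\tr\bigl((\varphi(z)-\varepsilon_\chi I)\varphi(w)\bigr)=0$ for all $w$ forces $\varphi(z)=\varepsilon_\chi I$). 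With that adjustment your argument is complete.
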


\begin{proof}
Since $g$ and $h$ are strongly cospectral, then for $\theta \in \Phi_{gh}^+$ we have $\sum_{\chi \in X_\theta}E_\chi e_g = \sum_{\chi \in X_\theta}E_\chi e_h$. This implies
\begin{equation*}
	\begin{split}
		\sum_{\chi \in X_\theta}\frac{\chi(1)}{|G|} \sum_{x \in G} \chi(gx^{-1}) e_x &= \sum_{\chi \in X_\theta}\frac{\chi(1)}{|G|} \sum_{x \in G} \chi(hx^{-1})e_x. \\
	\end{split}
\end{equation*}
Then for each $x \in G$ we get
\begin{equation*}
	\begin{split}
		\sum_{\chi \in X_\theta}\frac{\chi(1)}{|G|} \chi(gx^{-1}) &= \sum_{\chi \in X_\theta}\frac{\chi(1)}{|G|} \chi(hx^{-1}). \\
	\end{split}
\end{equation*}
Setting $x=h$ we obtain
\begin{equation*}
	\begin{split}
		\sum_{\chi \in X_\theta}\frac{\chi(1)}{|G|} \chi(gh^{-1}) &= \sum_{\chi \in X_\theta}\frac{\chi(1)}{|G|} \chi(1). \\
	\end{split}
\end{equation*}
As $\abs{\chi(gh^{-1})}\leq \chi(1)$, this equation forces $\chi(gh^{-1})=\chi(1)$ for all $\chi\in X_\theta$. If $\mu \in \Phi_{gh}^-$ then a similar argument shows that $\psi(gh^{-1})=-\psi(1)$ for all $\psi \in X_\mu$.
	
Consequently, we have shown that $\chi(gh^{-1})=\pm\chi(1)$ for all $\chi \in \Irr(G)$. This means that $z:=gh^{-1}$ must represented by either the identity matrix or its negative
in every irreducible complex representation of $G$.  It follows that $z$ is a central
element with $z^2=1$.
\end{proof}

Following Theorem~\ref{stronglycospectralcentralinvolution} we can now determine when two vertices are strongly cospectral in $\Cay(G,S)$.
	
\begin{theorem}\label{stronglycospectral}
Let $G$ be a finite group with central involution $z \in G$. Vertices $g$ and $gz$ are strongly cospectral in $X=\Cay(G,S)$ if and only if for each $\chi, \psi \in \Irr(G)$, whenever $\chi(S)/\chi(1) = \psi(S)/\psi(1)$ it implies $\chi(z)/\chi(1)=\psi(z)/\psi(1)$.
\end{theorem}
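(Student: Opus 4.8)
The plan is to reduce strong cospectrality of $g$ and $gz$ to a statement about how the central involution $z$ acts in each irreducible representation, and then to ``diagonalize'' that statement using the character idempotents $E_\chi$ of (\ref{charidem}). The one substantive computation is an identity relating the columns of $E_\chi$ at $g$ and at $gz$; everything else is bookkeeping with signs and a linear independence argument.

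First I would record that, since $z$ is central and $z^2=1$, in the irreducible representation affording $\chi$ the element $z$ acts as a scalar which must be $\pm 1$; this scalar is $\chi(z)/\chi(1)$, and I will write $\omega_\chi := \chi(z)/\chi(1) \in \{1,-1\}$. Passing to the group algebra, this says $L(z)E_\chi = \omega_\chi E_\chi$, since $E_\chi$ corresponds to a central element of $\C G$ on which $z$ acts by $\omega_\chi$. As $E_\chi$ is central in the image of $\C G$ it commutes with $L(g)$ and $L(z)$, so
\[
E_\chi e_{gz} = E_\chi L(g)L(z)e_1 = L(g)L(z)E_\chi e_1 = \omega_\chi\, L(g)E_\chi e_1 = \omega_\chi\, E_\chi e_g .
\]
Summing over $\chi\in X_\theta$ and using (\ref{projidem}) gives $\tilde E_\theta e_{gz} = \sum_{\chi\in X_\theta}\omega_\chi E_\chi e_g$, which is to be compared with $\tilde E_\theta e_g = \sum_{\chi\in X_\theta}E_\chi e_g$.

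Next I would show the vectors $\{E_\chi e_g : \chi\in X_\theta\}$ are linearly independent. Each $E_\chi$ is a Hermitian idempotent (from (\ref{charidem}), using $L(h)^*=L(h^{-1})$ and $\overline{\chi(h^{-1})}=\chi(h)$ one gets $E_\chi^*=E_\chi$) and $E_\chi E_\psi=0$ for $\chi\neq\psi$, so the $E_\chi e_g$ are pairwise orthogonal; and each is nonzero because, exactly as in the proof of Theorem~\ref{eigensupport}, the $g$th coordinate of $E_\chi e_g$ equals $\chi(1)^2/|G|\neq 0$. Hence, for a fixed eigenvalue $\theta$, the relation $\tilde E_\theta e_g = \epsilon_\theta\, \tilde E_\theta e_{gz}$ holds for some sign $\epsilon_\theta\in\{1,-1\}$ if and only if $\sum_{\chi\in X_\theta}(1-\epsilon_\theta\omega_\chi)E_\chi e_g = 0$, which by linear independence is equivalent to $\omega_\chi=\epsilon_\theta$ for all $\chi\in X_\theta$, i.e.\ to $\chi\mapsto\chi(z)/\chi(1)$ being constant on $X_\theta$.

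Finally I would assemble this over all $\theta\in\Spec(A)$: vertices $g$ and $gz$ are strongly cospectral exactly when $\chi(z)/\chi(1)$ is constant as $\chi$ ranges over each $X_\theta=\{\chi\in\Irr(G):\chi(S)/\chi(1)=\theta\}$, which is precisely the asserted implication that $\chi(S)/\chi(1)=\psi(S)/\psi(1)$ forces $\chi(z)/\chi(1)=\psi(z)/\psi(1)$. I do not expect a genuine obstacle here; the only real point is the identity $E_\chi e_{gz}=\omega_\chi E_\chi e_g$ of the first step, after which the equivalence is immediate.
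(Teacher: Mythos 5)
Your argument is correct and follows essentially the same route as the paper: both reduce the question to the identity $E_\chi e_{gz} = \frac{\chi(z)}{\chi(1)} E_\chi e_g$, which comes from the central involution $z$ acting as the scalar $\pm 1$ in each irreducible representation, and then ask when this sign is constant on each set $X_\theta$. The only real difference is the finishing step: where you invoke linear independence of the pairwise orthogonal nonzero vectors $E_\chi e_g$, the paper evaluates the $g$-th coordinate of the relation $\tilde{E}_\theta e_g = \pm \tilde{E}_\theta e_{gz}$ to obtain $\sum_{\chi \in X_\theta}\chi(1)^2 = \pm\sum_{\chi \in X_\theta}\frac{\chi(z)}{\chi(1)}\chi(1)^2$, which forces all the signs to agree since each $\chi(1)^2>0$ --- an equivalent, slightly more pedestrian argument.
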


\begin{proof}
First suppose that $g$ and $gz$ are strongly cospectral. Then for $\theta \in \Phi_{g,gz}^+$, we have $\sum_{\chi \in X_\theta}E_\chi e_g = \sum_{\chi \in X_\theta}E_\chi e_{gz}$ which implies
\begin{equation*}
	\begin{split}
		\sum_{\chi \in X_\theta}\frac{\chi(1)}{|G|} \sum_{x \in G} \chi(gx^{-1})e_x &= \sum_{\chi \in X_\theta}\frac{\chi(1)}{|G|} \sum_{x \in G} \chi(gzx^{-1})e_x. \\
	\end{split}
\end{equation*}
Then for each $x \in G$ we get
\begin{equation*}
	\begin{split}
		\sum_{\chi \in X_\theta}\frac{\chi(1)}{|G|} \chi(gx^{-1}) &= \sum_{\chi \in X_\theta}\frac{\chi(1)}{|G|} \chi(gzx^{-1}). \\
	\end{split}
\end{equation*}
If $\varphi$ is the representation affording $\chi$ then for all $h\in G$, we have
$\varphi(hz)=\varphi(h)\varphi(z)$. Here $\varphi(z)$ must be a scalar matrix
as $\varphi$ is irreducible and $z$ is central, and the scalar matrix must be $\pm I$ as $z$ is an involution. Taking traces, we see that the sign is $\chi(z)/\chi(1)$.
Setting $x=g$ in the above equation yields
\begin{equation*}
\begin{split}
	\sum_{\chi \in X_\theta}\frac{\chi(1)}{|G|} \chi(1) &= \sum_{\chi \in X_\theta}\frac{\chi(1)}{|G|} \frac{\chi(z)}{\chi(1)} \chi(1). \\
\end{split}
\end{equation*}
From this we conclude that $\chi(z)/\chi(1)=1$ for all $\chi \in X_\theta$. A similar argument shows that if $\mu \in \Phi_{g,gz}^-$ then for all $\psi \in X_\mu$ we have $\psi(z)/\psi(1)=-1$.
	
Conversely, suppose that for each $\chi, \psi \in \Irr(G)$, whenever $\chi(S)/\chi(1) = \psi(S)/\psi(1)$ it implies $\chi(z)/\chi(1)=\psi(z)/\psi(1)$. Then for $\theta \in \Spec(X)$ with $\chi(z)/\chi(1)=1$ for all $\chi \in X_\theta$ we have
\begin{equation*}
\begin{split}
	\sum_{\chi \in X_\theta}E_\chi e_{gz} &=\sum_{\chi \in X_\theta}\frac{\chi(1)}{|G|} \sum_{x \in G} \chi(gzx^{-1})e_x \\
	&=\sum_{\chi \in X_\theta}\frac{\chi(1)}{|G|} \sum_{x \in G} \frac{\chi(z)}{\chi(1)} \chi(gx^{-1})e_x  \\
	&=\sum_{\chi \in X_\theta}\frac{\chi(1)}{|G|} \sum_{x \in G} \chi(gx^{-1})e_x  \\
	&= \sum_{\chi \in X_\theta}E_\chi e_g, \\
\end{split}
\end{equation*}
with a similar computation if $\chi(z)/\chi(1)=-1$ for all $\chi \in X_\theta$.
\end{proof}

\begin{remark}\label{eigpartitions}
	From Theorem~\ref{stronglycospectral} and its proof it is clear that if the vertices $g$ and $gz$ are strongly cospectral in $\Cay(G,S)$ then the sets $\Phi_{g,gz}^+$ and $\Phi_{g,gz}^-$ are
\[ \Phi_{g,gz}^+ = \{ \chi(S)/\chi(1) : \chi \in \Irr(G) \ \textnormal{and} \ \chi(z)/\chi(1)=1 \} \]
\[ \Phi_{g,gz}^- = \{ \psi(S)/\psi(1) : \psi \in \Irr(G) \ \textnormal{and} \ \psi(z)/\psi(1)=-1 \} \]
Note that the sets $\Phi_{g,gz}^+$ and $\Phi_{g,gz}^-$ do not depend on the choice of group element $g$. 
\end{remark}

\section{Extraspecial Groups}

A finite group $G$ is said to be an \textit{extraspecial p-group} if $G$ is a $p$-group whose center $Z$ has order $p$, and $G/Z$ is isomorphic to $(\Z/p\Z)^m$ for some $m \geq 1$. It is well known that an extraspecial $p$-group must have order $p^{2n+1}$ for some $n \geq 1$. The irreducible characters of extraspecial $p$-groups are well studied. If $G$ is an extraspecial $p$-group of order $p^{2n+1}$ then the irreducible linear characters of $G$ are the linear characters of $(\Z/p\Z)^{2n}$. The irreducible non-linear characters are
\[ \psi_\lambda(g)=\begin{cases}
		0, & \text{if $g \notin Z$} \\
		p^n \lambda(g), &\text{if $g \in Z$.} \\
	\end{cases} \] 
where $\lambda$ is a non-principal character of $Z$. In particular, when $G$ is an extraspecial $2$-group of order $2^{2n+1}$ the irreducible linear characters are $\chi_y(g)=(-1)^{y \cdot \bar{g}}$, where $y \in (\Z/2\Z)^{2n}$ and $\bar{g}$ is the image of $g \in G$ in $(\Z/2\Z)^{2n}$. Letting $z$ denote the unique central involution of $G$, then the unique irreducible non-linear character is
\begin{equation}\label{nonlinearchar}
	\psi(g) = \begin{cases}
	0, & \text{if $g \in G \setminus Z$} \\
	-2^n, & \text{if $g = z$} \\
	2^n, & \text{if $g = 1$.} \\
        \end{cases}
\end{equation}
For more background on extraspecial groups, see \cite[Chapter 5.5]{G3}. 

From now until the end of section \ref{fracrev} we will let $G$ denote an extraspecial $2$-group of order $2^{2n+1}$ with unique central involution $z$. We note that for each positive integer $n$ there are two isomorphism types of extraspecial $2$-groups of order $2^{2n+1}$; our results do not depend on the isomorphism type. We will let $S \subseteq G \setminus \{1\}$ be a union of conjugacy classes that generates the group $G$. Note that each conjugacy class in $G$ is closed under inversion, and that $S$ generates $G$ if and only if the image of $S$ in $(\Z/2\Z)^{2n}$ generates $(\Z/2\Z)^{2n}$. It is easy to see that the conjugacy classes of $G$ with size greater than $1$ are the non-identity cosets in the quotient $G/Z$ and thus have size $2$. Explicitly, let us write $S \setminus \{z\} = K(x_1)\cup K(x_2) \cup\cdots\cup K(x_\ell)$,  where $K(x_i)$ is a conjugacy class of size $2$ of the element $x_i \in G$, and $z$ is possibly not in $S$. As $G/Z$ is isomorphic to $(\Z/2\Z)^{2n}$, we will let $\bar{g}$ denote the image of a group element $g \in G$ in $(\Z/2\Z)^{2n}$. Additionally, for $y \in (\Z/2\Z)^{2n}$ we will let $e_y$ denote the number of elements $x_i$ with $y \cdot \bar{x}_i = 0$ modulo $2$.

\begin{theorem}\label{spec} In the notation above, let $X=\Cay(G,S)$ be the normal
  Cayley graph with connecting set $S$. Then $X$ is integral, and its spectrum is (the multiset):
 \[ \Spec(X) = \{ 4e_y -2\ell+1: y \in (\Z/2\Z)^{2n}\} \cup \{ -1\}, \]
	when $z \in S$, and 
 \[ \Spec(X) = \{ 4e_y -2\ell: y \in (\Z/2\Z)^{2n}\} \cup \{ 0\}, \]
	when $z \notin S$.
\end{theorem}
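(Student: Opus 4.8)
The plan is to compute the eigenvalues directly from the character-theoretic description of the spectrum given in the preamble, namely that every eigenvalue of $X=\Cay(G,S)$ has the form $\chi(S)/\chi(1)$ for some $\chi\in\Irr(G)$, together with the explicit list of irreducible characters of the extraspecial $2$-group $G$. There are two families to handle: the $2^{2n}$ linear characters $\chi_y$ with $y\in(\Z/2\Z)^{2n}$, and the single non-linear character $\psi$ of degree $2^n$ from \eqref{nonlinearchar}. Since all the values involved are integers, integrality of $X$ will be immediate once the computation is done.

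First I would handle the linear characters. For $\chi_y$ we have $\chi_y(1)=1$, so the eigenvalue is simply $\chi_y(S)=\sum_{s\in S}(-1)^{y\cdot\bar s}$. Writing $S\setminus\{z\}=K(x_1)\cup\cdots\cup K(x_\ell)$ with each class of size $2$ and both elements of $K(x_i)$ having the same image $\bar x_i$ in $(\Z/2\Z)^{2n}$, each class contributes $2(-1)^{y\cdot\bar x_i}$. By the definition of $e_y$ (the number of $i$ with $y\cdot\bar x_i=0$), the classes contribute $2e_y-2(\ell-e_y)=4e_y-2\ell$ in total. If $z\in S$, then since $\bar z=0$ we have $\chi_y(z)=1$, adding $1$ to give $4e_y-2\ell+1$; if $z\notin S$ we just get $4e_y-2\ell$. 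This accounts for the $2^{2n}$ eigenvalues in the first set of each displayed formula.

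Next I would handle the non-linear character $\psi$, which has degree $2^n$ and vanishes off the center. Thus $\psi(S)=0$ if $z\notin S$ and $\psi(S)=\psi(z)=-2^n$ if $z\in S$, and dividing by $\psi(1)=2^n$ gives the eigenvalue $0$ when $z\notin S$ and $-1$ when $z\in S$. This is exactly the extra eigenvalue $\{-1\}$ or $\{0\}$ appended in the statement. Finally I would note the multiplicity bookkeeping: each linear $\chi_y$ contributes $\chi_y(1)^2=1$ to the multiplicity of its eigenvalue, accounting for $2^{2n}$ of the $2^{2n+1}=|G|$ dimensions, and $\psi$ contributes $\psi(1)^2=2^{2n}$, accounting for the remaining dimensions; so the multiset as listed (with the $2^{2n}$ values $4e_y-2\ell\,(+1)$ counted with multiplicity one each, plus the single value $-1$ or $0$ repeated $2^{2n}$ times) is the full spectrum. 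Since every listed value is an integer, $X$ is integral.

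There is no real obstacle here — the result is a direct consequence of the explicit character table of $G$ recalled just before the statement. The only point requiring a little care is the multiplicity accounting for the appended eigenvalue (it comes with multiplicity $2^{2n}$, not $1$), and making sure the ``multiset'' phrasing is interpreted correctly so that the dimension count $2^{2n}\cdot 1 + 1\cdot 2^{2n}=2^{2n+1}=|G|$ checks out; I would state this explicitly to justify that the list is complete.
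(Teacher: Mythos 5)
Your proposal is correct and takes essentially the same route as the paper: both compute each eigenvalue $\chi(S)/\chi(1)$ directly from the explicit character table, obtaining $4e_y-2\ell\,(+1)$ from the linear characters and $-1$ or $0$ from the unique nonlinear character. The only difference is your explicit multiplicity bookkeeping (the appended eigenvalue carrying multiplicity $\psi(1)^2=2^{2n}$), which the paper leaves implicit in its ``multiset'' phrasing.
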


\begin{proof}
We will prove the case when $z \in S$ as the case when $z \notin S$ follows similarly. The eigenvalues of $X$ are $\chi(S)/\chi(1)$ where $\chi$ is an irreducible character of $G$. First suppose that $\chi$ is an irreducible linear character. Then $\chi(x)=(-1)^{y \cdot \bar{x}}$ for some $y \in (\Z/2\Z)^{2n}$, and the eigenvalue afforded by $\chi$ is computed by
\begin{equation*}
	\begin{split}
		\chi(S) &= 1+ 2 \sum_{i=1}^\ell (-1)^{y \cdot \bar{x}_i} \\
		&= 1+2(e_y-\ell+e_y) \\ 
		&= 4e_y-2\ell+1,\\
	\end{split}
\end{equation*}
giving the eigenvalues in the first set above. Next suppose that $\psi$ is an irreducible nonlinear character.
Then from equation~(\ref{nonlinearchar}), $\psi$ has degree $2^n$ and vanishes on the conjugacy classes of size two. Thus the eigenvalue afforded by $\psi$ is 
\[ \psi(S)/\psi(1)=\psi(z)/\psi(1)=-1 \]
as claimed. 
\end{proof}

\begin{theorem}\label{stronglycospectralG}
Let $X=\Cay(G,S)$ be a normal Cayley graph where $G$ is an extraspecial $2$-group. Vertices $g$ and $gz$ of $X$ are strongly cospectral if and only if the sets 
\[ \{ \chi(S)/\chi(1): \chi \in \Irr(G) \ \textnormal{linear}\}, \ \ \{ \psi(S)/\psi(1) : \psi \in \Irr(G) \ \textnormal{nonlinear}\} \]
are disjoint. 
\end{theorem}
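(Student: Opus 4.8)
The plan is to reduce everything to the general criterion of Theorem~\ref{stronglycospectral} by computing the ``central sign'' $\chi(z)/\chi(1)$ for every $\chi\in\Irr(G)$. Recall that the criterion says: $g$ and $gz$ are strongly cospectral in $X$ if and only if, for all $\chi,\psi\in\Irr(G)$, the equality $\chi(S)/\chi(1)=\psi(S)/\psi(1)$ forces $\chi(z)/\chi(1)=\psi(z)/\psi(1)$. So I only need to know how $\chi\mapsto\chi(z)/\chi(1)$ behaves on the irreducible characters of an extraspecial $2$-group.

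First I would record the central signs. Since $z\in Z$, its image $\bar z$ in $(\Z/2\Z)^{2n}$ is trivial, so every irreducible linear character $\chi_y$ satisfies $\chi_y(z)=(-1)^{y\cdot\bar z}=1=\chi_y(1)$, hence $\chi_y(z)/\chi_y(1)=1$. For the unique irreducible nonlinear character $\psi$, equation~\eqref{nonlinearchar} gives $\psi(z)/\psi(1)=-2^n/2^n=-1$. Thus the function $\chi\mapsto\chi(z)/\chi(1)$ is identically $+1$ on the linear characters and equals $-1$ on the single nonlinear character.

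Next I would plug this into the criterion. Suppose $\chi,\psi\in\Irr(G)$ afford the same eigenvalue $\chi(S)/\chi(1)=\psi(S)/\psi(1)$. If $\chi$ and $\psi$ are both linear, their central signs agree (both are $+1$); since there is only one nonlinear character, the case where both are nonlinear forces $\chi=\psi$, and again the signs agree. The only way the implication in Theorem~\ref{stronglycospectral} can fail is therefore when one of $\chi,\psi$ is linear and the other is the nonlinear character $\psi$, in which case the central signs are $+1$ and $-1$ and disagree. Consequently the criterion holds if and only if no linear character and the nonlinear character afford a common eigenvalue, i.e.\ if and only if the two sets
\[ \{\chi(S)/\chi(1): \chi\in\Irr(G)\ \textnormal{linear}\},\qquad \{\psi(S)/\psi(1):\psi\in\Irr(G)\ \textnormal{nonlinear}\} \]
are disjoint. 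This is exactly the claimed equivalence.

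The argument is essentially a short computation, so I do not expect a serious obstacle; the two points that require a little care are the observation that $z$ lies in the kernel of $G\to G/Z$ and is therefore ``invisible'' to all linear characters (giving central sign $+1$), and the fact that an extraspecial $2$-group has a \emph{unique} nonlinear irreducible character, which is what rules out any internal inconsistency among the nonlinear characters and lets the failure of the criterion be pinned down entirely to a linear/nonlinear eigenvalue collision.
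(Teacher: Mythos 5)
Your proof is correct and follows essentially the same route as the paper: both arguments reduce to the criterion of Theorem~\ref{stronglycospectral} after observing that $\chi(z)/\chi(1)=+1$ for every linear character (since $z\in Z$ maps to the identity in $G/Z$) and $\psi(z)/\psi(1)=-1$ for the unique nonlinear character, so the criterion can only fail via a linear/nonlinear eigenvalue collision. The paper packages the forward direction through Remark~\ref{eigpartitions}, but the underlying computation is identical to yours.
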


\begin{proof}
If $g$ and $gz$ are strongly cospectral then from Remark~\ref{eigpartitions} we have
\[ \Phi_{g,gz}^+= \{  \chi(S)/\chi(1): \chi \in \Irr(G) \ \textnormal{linear}\} \]
and
\[ \Phi_{g,gz}^- = \{\psi(S)/\psi(1) : \psi \in \Irr(G) \ \textnormal{nonlinear}\}, \]
which are disjoint sets by definition. Conversely, if $\{ \chi(S)/\chi(1): \chi \in \Irr(G) \ \textnormal{linear}\}$ and $\{\psi(S)/\psi(1) : \psi \in \Irr(G) \ \textnormal{nonlinear}\}$ are disjoint, then it is never the case that $\chi(S)/\chi(1) = \psi(S)/\psi(1)$ for some linear character $\chi$ of $G$ and some nonlinear character $\psi$ of $G$. Thus Theorem~\ref{stronglycospectral} implies that $g$ and $gz$ are strongly cospectral.
\end{proof}

\section{Perfect State Transfer}
We are now ready to give a preliminary result on perfect state transfer in normal Cayley graphs of extraspecial $2$-groups. Since the spectrum of $\Cay(G,S)$ will depend on whether or not $z$ is in $S$, we will consider each case separately. For a rational number $x \in \Q$ we will let $\nu_2(x)$ denote its $2$-valuation. We will use the following characterization of perfect state transfer in regular graphs, which is a reformulation of a theorem given by Coutinho \cite[Theorem 2.4.4]{Co}. 

\begin{theorem}\label{thmcoutinhopst}
	A $k$-regular graph $X$ admits perfect state transfer between vertices $u$ and $v$ at time $\tau$ with phase $\lambda$ if and only if the following hold:
\begin{enumerate}[label=(\roman*)]
	\item $u$ and $v$ are strongly cospectral.
	\item The nonzero elements of $\Phi_u$ are all integers.
	\item There is an integer $N \geq 0$ such that $N = \nu_2(k - \mu)$ for all $\mu \in \Phi_{uv}^-$.
	\item With $N$ given in (iii), each $\theta \in \Phi_{uv}^+$ has $\nu_2(k - \theta) > N$.
\end{enumerate}
	Moreover, if these conditions hold and perfect state transfer occurs between $u$ and $v$ at time $\tau$ with phase $\lambda$, then:
\begin{enumerate}[label=(\alph*)]
	\item  there is a minimum time $\tau_0$ where $X$ admits perfect transfer, which is
\[ \tau_0 = \frac{\pi}{f}, \ \ f = \gcd\{\theta_0 - \theta:\theta \in \Phi_u \} \]
	\item $\tau$ is an odd multiple of $\tau_0$.
	\item $\nu_2(f)=N$.
\end{enumerate}	
\end{theorem}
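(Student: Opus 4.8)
The plan is to prove this directly from the spectral decomposition $U_A(t)=\sum_\theta e^{it\theta}\tilde{E}_\theta$, rather than by quoting Coutinho's theorem verbatim, so that the dichotomy hidden in his eigenvalue hypothesis gets resolved on the spot using the regularity of $X$. First I would record the basic reduction: if $U_A(\tau)e_u=\lambda e_v$, then applying $\tilde{E}_\theta$ and using $\tilde{E}_\theta U_A(\tau)=e^{i\tau\theta}\tilde{E}_\theta$ gives $e^{i\tau\theta}\tilde{E}_\theta e_u=\lambda\tilde{E}_\theta e_v$ for every eigenvalue $\theta$, while $\abs{\lambda}=1$ since $U_A(\tau)$ is unitary. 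Because the $\tilde{E}_\theta$ are real matrices, $\tilde{E}_\theta e_u$ and $\tilde{E}_\theta e_v$ are real vectors, so whenever $\tilde{E}_\theta e_u\neq 0$ the unimodular scalar $\lambda^{-1}e^{i\tau\theta}$ must be real, hence $\pm 1$, and $\tilde{E}_\theta e_v=\pm\tilde{E}_\theta e_u$. This simultaneously yields strong cospectrality, which is (i), and shows $e^{i\tau\theta}=\lambda$ on $\Phi_{uv}^+$ and $e^{i\tau\theta}=-\lambda$ on $\Phi_{uv}^-$. The converse is the short computation $U_A(\tau)e_u=\sum_{\theta\in\Phi_u}e^{i\tau\theta}\tilde{E}_\theta e_u=\lambda\sum_{\theta\in\Phi_u}\tilde{E}_\theta e_v=\lambda e_v$, using strong cospectrality and the matching signs. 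So the theorem reduces to showing that the existence of $\tau>0$ and $\lambda$ satisfying those two families of equations is equivalent to (ii)--(iv), together with the final clauses.

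Next I would bring in regularity. Since $X$ is connected and $k$-regular, $\tilde{E}_k=\tfrac{1}{n}J$, so $k\in\Phi_u$ and in fact $k\in\Phi_{uv}^+$; hence $\lambda=e^{i\tau k}$ and the equations become $\tau(k-\theta)\in 2\pi\Z$ for $\theta\in\Phi_{uv}^+$ and $\tau(k-\mu)\in\pi+2\pi\Z$ for $\mu\in\Phi_{uv}^-$, where $\Phi_{uv}^-\neq\emptyset$ (otherwise $e_u=e_v$). These relations force all the numbers $k-\theta$, $\theta\in\Phi_u$, to be rational multiples of one another, and the hard part of the whole argument is to upgrade this to: the eigenvalues in $\Phi_u$ are integers, which is (ii). Here I would use that the cyclic $\Z[A]$-modules generated by $e_u+e_v$ and by $e_u-e_v$ are finitely generated, hence free, so that $A$ acts on each by an integer matrix and $\prod_{\theta\in\Phi_{uv}^+}(x-\theta)$ and $\prod_{\mu\in\Phi_{uv}^-}(x-\mu)$ lie in $\Z[x]$; therefore $\Phi_u$ is a Galois-stable set of algebraic integers. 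Writing $\theta=k-q_\theta(k-\mu_0)$ with $q_\theta\in\Q$ for a fixed $\mu_0\in\Phi_{uv}^-$ and applying Galois automorphisms, every conjugate of $k-\mu_0$ is a rational multiple of $k-\mu_0$, which forces $k-\mu_0$ to be either rational or a rational multiple of some $\sqrt{\Delta}$ with $\Delta>1$ squarefree. In the quadratic case each $\theta\in\Phi_u$ equals $k+m_\theta\sqrt{\Delta}$ with $m_\theta\in\Z$, and Galois-stability together with the bound $\theta\le k$ (spectral radius of a $k$-regular graph) forces $m_\theta=0$, so $\Phi_u=\{k\}$, contradicting $\Phi_{uv}^-\neq\emptyset$. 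Hence $k-\mu_0\in\Q$ and all of $\Phi_u\subseteq\Z$, giving (ii).

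Finally, with integrality in hand, I would set $\tau=\pi/f$: the two families of equations say $f\mid k-\theta$ for all $\theta\in\Phi_u$, with $(k-\mu)/f$ odd for $\mu\in\Phi_{uv}^-$ and $(k-\theta)/f$ even for $\theta\in\Phi_{uv}^+$, and comparing $2$-adic valuations shows such an $f$ exists precisely when all $\nu_2(k-\mu)$, $\mu\in\Phi_{uv}^-$, coincide with some $N$ and $\nu_2(k-\theta)>N$ for all $\theta\in\Phi_{uv}^+$, that is, when (iii) and (iv) hold. For the concluding clauses, given (i)--(iv) take $f=\gcd\{k-\theta:\theta\in\Phi_u\}$ (so $\theta_0=k$ and $\nu_2(f)=N$); then $\tau_0=\pi/f$ solves both relations and therefore realizes perfect state transfer, while any admissible $\tau$ lies in $\bigcap_{\theta\in\Phi_u\setminus\{k\}}\tfrac{\pi}{k-\theta}\Z=\tfrac{\pi}{f}\Z=\tau_0\Z$, and the parity requirement on the $\Phi_{uv}^-$-equations forces the integer multiple to be odd; this is (a), (b), (c). I expect the algebraic-integer/Galois step feeding into the spectral-radius bound to be the only genuine obstacle; once $\Phi_u\subseteq\Z$ is established, the remaining arguments are routine bookkeeping with $2$-adic valuations.
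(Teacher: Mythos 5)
The paper does not actually prove this statement: it is quoted as a reformulation of \cite[Theorem 2.4.4]{Co}, so there is no in-paper argument to compare against. Your proposal is, in substance, a correct reconstruction of the standard Coutinho--Godsil proof (projecting the PST equation onto eigenspaces to get strong cospectrality and the phase conditions $e^{i\tau\theta}=\pm\lambda$, deducing the ratio condition, proving Galois-stability of $\Phi_u$ via the monic integral relative minimal polynomial of $A$ with respect to $e_u\pm e_v$, and then doing the $2$-adic bookkeeping). The one piece of genuine added value is exactly the content of the paper's ``reformulation'': Coutinho's original condition (ii) allows a second alternative in which the eigenvalues of $\Phi_u$ lie in a real quadratic field, and you correctly explain why regularity kills it --- $k\in\Phi_{uv}^+$ forces $\lambda=e^{i\tau k}$, and if some $\theta\in\Phi_u$ had the form $k+m_\theta\sqrt{\Delta}$ with $m_\theta\neq 0$ then Galois-stability of $\Phi_u$ would put $k+\abs{m_\theta}\sqrt{\Delta}>k$ into the spectrum, contradicting the spectral-radius bound of a $k$-regular graph. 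That is the right argument and it is what justifies stating (ii) as ``all integers.''

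Two details are compressed and should be spelled out if this were written in full. First, the step from ``every conjugate of $k-\mu_0$ is a rational multiple of $k-\mu_0$'' to ``$k-\mu_0$ is rational or a rational multiple of $\sqrt{\Delta}$'' needs the observation that all these conjugates are real (they are of the form $k-\sigma(\mu_0)$ with $\sigma(\mu_0)$ an eigenvalue of a symmetric integer matrix), so the minimal polynomial, if of the form $x^d-c$, can have all roots real only when $d\le 2$. Second, in the final bookkeeping you should note that $\mu\neq k$ for $\mu\in\Phi_{uv}^-$ (since $k\in\Phi_{uv}^+$ and the two sets are disjoint), so that $\nu_2(k-\mu)$ is a well-defined nonnegative integer and the identity $\bigcap_{\theta\in\Phi_u\setminus\{k\}}\tfrac{\pi}{k-\theta}\Z=\tfrac{\pi}{f}\Z$ applies. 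Neither is a gap in the plan, only in the write-up.
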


	We have the following criteria for perfect state transfer when $z$ is in $S$.

\begin{lemma}\label{pstz} In the notation of Theorem~\ref{spec},
 the graph $X=\Cay(G,S)$ with $z \in S$ admits perfect state transfer if and only if:
\begin{enumerate}[label=(\roman*)]
	\item $e_y \neq \frac{\ell-1}{2}$ for each $y \in (\Z/2\Z)^{2n}$;
	\item $\nu_2(\ell-e_y) \geq \nu_2(\ell+1)$ for each $y \in (\Z/2\Z)^{2n}$.
\end{enumerate}
\end{lemma}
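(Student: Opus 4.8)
The plan is to apply Theorem~\ref{thmcoutinhopst} to $X$ with the vertex pair $u=g$, $v=gz$. By Theorem~\ref{GCGV} and the fact that an extraspecial $2$-group has a \emph{unique} central involution, this is the only pair between which perfect state transfer can occur, so ``$X$ admits perfect state transfer'' is equivalent to ``$X$ admits perfect state transfer between $g$ and $gz$''; moreover, by Remark~\ref{eigpartitions} none of the relevant spectral data depends on $g$. First I would record the routine input: $X$ is $k$-regular with $k=\abs{S}=2\ell+1$ (each of the $\ell$ non-central classes in $S$ has size $2$), and by Theorem~\ref{spec} $X$ is integral, so condition (ii) of Theorem~\ref{thmcoutinhopst} holds automatically and, by Theorem~\ref{eigensupport}, $\Phi_g=\Spec(X)$.

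Next I would pin down $\Phi_{g,gz}^{+}$ and $\Phi_{g,gz}^{-}$ (assuming $g$ and $gz$ are strongly cospectral, which Theorem~\ref{stronglycospectralG} handles below). Since $z\in Z$ we have $\bar z=0$ in $(\Z/2\Z)^{2n}$, so every linear character satisfies $\chi_y(z)/\chi_y(1)=(-1)^{y\cdot 0}=1$, while the unique nonlinear character has $\psi(z)/\psi(1)=-2^n/2^n=-1$. Hence by Remark~\ref{eigpartitions},
\[
\Phi_{g,gz}^{+}=\{4e_y-2\ell+1 : y\in(\Z/2\Z)^{2n}\},\qquad \Phi_{g,gz}^{-}=\{-1\}.
\]
Now the four conditions of Theorem~\ref{thmcoutinhopst} translate one at a time. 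Condition (i) is strong cospectrality of $g$ and $gz$; by Theorem~\ref{stronglycospectralG} this holds iff the linear and nonlinear eigenvalue sets are disjoint, i.e. iff $-1\notin\{4e_y-2\ell+1\}$, which rearranges to $e_y\neq(\ell-1)/2$ for all $y$ --- part (i) of the lemma. Condition (iii) is automatic because $\Phi_{g,gz}^{-}=\{-1\}$ is a singleton: the integer $N$ is forced to be $\nu_2(k-(-1))=\nu_2(2\ell+2)=1+\nu_2(\ell+1)$. For condition (iv), each $\theta=4e_y-2\ell+1\in\Phi_{g,gz}^{+}$ gives $k-\theta=4(\ell-e_y)$, so $\nu_2(k-\theta)=2+\nu_2(\ell-e_y)$, and the requirement $\nu_2(k-\theta)>N$ becomes $\nu_2(\ell-e_y)\geq\nu_2(\ell+1)$ --- part (ii) of the lemma (the term $y=0$ has $e_0=\ell$, giving $\theta=k$ and $\nu_2(\ell-e_0)=\nu_2(0)=\infty$, consistent with the usual convention). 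Combining these equivalences in both directions yields the lemma.

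There is no serious obstacle here; the proof is a bookkeeping exercise in Theorem~\ref{thmcoutinhopst} once the spectral dictionary is set up. The two points that deserve a moment's care are: (a) noting $\bar z=0$, which puts \emph{all} linear characters into the $+$ part and thereby collapses $\Phi_{g,gz}^{-}$ to a single eigenvalue --- this is precisely what makes condition (iii) of Theorem~\ref{thmcoutinhopst} vacuous and forces the value of $N$; and (b) verifying that the strict $2$-adic inequality $\nu_2(k-\theta)>N$ of condition (iv) is equivalent to the non-strict inequality $\nu_2(\ell-e_y)\geq\nu_2(\ell+1)$ stated in the lemma, which is valid because $\nu_2$ is integer-valued (or $+\infty$).
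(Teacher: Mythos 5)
Your proof is correct and follows essentially the same route as the paper's: both reduce to Theorem~\ref{thmcoutinhopst} via Theorem~\ref{eigensupport}, Remark~\ref{eigpartitions}, and Theorem~\ref{stronglycospectralG}, identify $\Phi_{g,gz}^{-}=\{-1\}$ and $\Phi_{g,gz}^{+}=\{4e_y-2\ell+1\}$, and carry out the same $2$-adic valuation computation $\nu_2(4(\ell-e_y))>\nu_2(2\ell+2)\iff\nu_2(\ell-e_y)\geq\nu_2(\ell+1)$. Your added remarks (uniqueness of the pair $g,gz$ via Theorem~\ref{GCGV}, and the $y=0$ convention $\nu_2(0)=\infty$) are correct and only make the bookkeeping more explicit than the paper's version.
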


\begin{proof}
	Suppose first that $X$ admits perfect state transfer from $g$ to $gz$. From Theorem~\ref{eigensupport} we know that the eigenvalue supports of each vertex contain every eigenvalue of $X$. Theorem~\ref{thmcoutinhopst} implies that the vertices $g$ and $gz$ are strongly cospectral, and by Remark~\ref{eigpartitions} the sets $\Phi_{g,gz}^+$ and $\Phi_{g,gz}^-$ are
	\[ \Phi_{g,gz}^+= \{ 4e_y-2\ell+1 : y \in (\Z/2\Z)^{2n}  \}, \]
	\[ \Phi_{g,gz}^-= \{-1\}. \] 
	As the sets $\Phi_{g,gz}^+$ and $\Phi_{g,gz}^-$ are disjoint, then $4e_y-2\ell+1 \neq -1$, which implies (i). Moreover, the valuation criteria of Theorem~\ref{thmcoutinhopst} (iii)-(iv) imply 
	\[ \nu_2(|S|-4e_y +2\ell-1) > \nu_2(|S|+1) \]
	for all $y \in (\Z /2\Z)^{2n}$. Simplifying, we get
	\[\nu_2(4\ell - 4e_y) > \nu_2(2\ell+2)\]
	which is equivalent to
	\[ \nu_2(\ell-e_y) \geq \nu_2(\ell+1) \]
	so that (ii) is necessary. Conversely, if (i)-(ii) hold then (i) implies that the sets 
	\[ \{ 4e_y-2\ell+1 : y \in (\Z/2\Z)^{2n}\}, \ \ \{-1\} \] 
	are disjoint, so by Theorem~\ref{stronglycospectralG} the vertices $g$ and $gz$ are strongly cospectral. From Theorem~\ref{eigensupport}, the eigenvalue support of $g$ contains each eigenvalue of $X$, and each such eigenvalue is an integer by Theorem~\ref{spec} so that Theorem~\ref{thmcoutinhopst} (ii) is satisfied. Lastly, (ii) implies that 
	\[\nu_2(|S|-4e_y+2\ell-1) > \nu_2(|S|+1)\]
	so that Theorem~\ref{thmcoutinhopst} (iii)-(iv) hold. Therefore $X$ admits perfect state transfer from $g$ to $gz$.  
\end{proof}

	Note that in Lemma~\ref{pstz}, (ii) implies (i). Next we will consider the case when $z \notin S$ and prove a preliminary result analogous to Lemma~\ref{pstz}.
	
\begin{lemma}\label{pstnoz}
	In the notation of Theorem~\ref{spec},
 the graph $X=\Cay(G,S)$ with $z \notin S$ admits perfect state transfer if and only if:
\begin{enumerate}[label=(\roman*)]
	\item $e_y \neq \frac{\ell}{2}$ for each $y \in (\Z/2\Z)^{2n}$;
	\item $\nu_2(\ell-e_y) \geq \nu_2(\ell)$ for each $y \in (\Z/2\Z)^{2n}$.
\end{enumerate}
\end{lemma}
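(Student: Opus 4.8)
The plan is to mirror the proof of Lemma~\ref{pstz}, substituting the arithmetic appropriate to the case $z\notin S$. First I would record the data. Since $z\notin S$ and $S\setminus\{z\}=K(x_1)\cup\cdots\cup K(x_\ell)$ is a union of $\ell$ conjugacy classes of size two, the graph $X$ is $k$-regular with $k=\abs{S}=2\ell$. By Theorem~\ref{spec} the linear character $\chi_y$ affords the eigenvalue $4e_y-2\ell$, while the unique nonlinear character $\psi$ affords $\psi(S)/\psi(1)=0$. Because $z$ is central, $\bar z=0$ in $(\Z/2\Z)^{2n}$, so $\chi_y(z)=1$ for every linear character, whereas $\psi(z)/\psi(1)=-1$. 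Hence, by Remark~\ref{eigpartitions}, whenever $g$ and $gz$ are strongly cospectral we have
\[ \Phi_{g,gz}^+=\{\,4e_y-2\ell : y\in(\Z/2\Z)^{2n}\,\}, \qquad \Phi_{g,gz}^-=\{0\}. \]

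For the forward direction, assume $X$ admits perfect state transfer. By the discussion following Theorem~\ref{GCGV} the transfer is between some pair $g$ and $gz$, and Theorem~\ref{thmcoutinhopst}(i) shows these vertices are strongly cospectral, so the description of $\Phi_{g,gz}^{\pm}$ above applies. Since $\Phi_{g,gz}^+$ and $\Phi_{g,gz}^-$ are disjoint by definition, $4e_y-2\ell\neq 0$ for all $y$, which is exactly (i). Next, condition (iii) of Theorem~\ref{thmcoutinhopst} applied to the singleton $\Phi_{g,gz}^-=\{0\}$ forces $N=\nu_2(k)=\nu_2(2\ell)=1+\nu_2(\ell)$, and condition (iv) then reads $\nu_2\bigl(2\ell-(4e_y-2\ell)\bigr)=\nu_2\bigl(4(\ell-e_y)\bigr)=2+\nu_2(\ell-e_y)>1+\nu_2(\ell)$, which rearranges to $\nu_2(\ell-e_y)\geq\nu_2(\ell)$, i.e.\ (ii).

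For the converse, assume (i) and (ii). Condition (i) says the sets $\{4e_y-2\ell : y\in(\Z/2\Z)^{2n}\}$ and $\{0\}$ are disjoint, so Theorem~\ref{stronglycospectralG} gives that $g$ and $gz$ are strongly cospectral; Theorem~\ref{eigensupport} together with the integrality in Theorem~\ref{spec} supplies conditions (i)--(ii) of Theorem~\ref{thmcoutinhopst}; taking $N=1+\nu_2(\ell)=\nu_2(k)$ verifies (iii) because $\Phi_{g,gz}^-=\{0\}$; and (ii) yields $\nu_2\bigl(k-(4e_y-2\ell)\bigr)=2+\nu_2(\ell-e_y)\geq 2+\nu_2(\ell)>N$, which is (iv). Thus $X$ admits perfect state transfer from $g$ to $gz$. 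Since this is essentially a transcription of the $z\in S$ argument with $k=2\ell$ in place of $2\ell+1$ and $\nu_2(\ell)$ in place of $\nu_2(\ell+1)$, I do not anticipate a genuine obstacle; the only points needing care are the bookkeeping in (iii)--(iv) --- recognizing that $\Phi_{g,gz}^-$ is the singleton $\{0\}$, so that $N$ is pinned down to $\nu_2(2\ell)$ --- and tracking the factor of $4$ that appears when comparing the degree $k$ with an eigenvalue of the form $4e_y-2\ell$.
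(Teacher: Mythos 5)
Your proposal is correct and follows essentially the same route as the paper, which simply notes that the argument is analogous to the $z\in S$ case and records the disjointness of $\Phi_{g,gz}^+=\{4e_y-2\ell\}$ and $\Phi_{g,gz}^-=\{0\}$ for (i) and the valuation inequality $\nu_2(4\ell-4e_y)>\nu_2(2\ell)$ for (ii). Your write-up is in fact more explicit than the paper's about pinning down $N=\nu_2(2\ell)$ and checking each hypothesis of Theorem~\ref{thmcoutinhopst} in the converse direction.
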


\begin{proof}
  The proof is analogous to the proof of Lemma~\ref{pstz}. First suppose that $X$ admits perfect state transfer between vertices $g$ and $gz$. Then $g$ and $gz$ are strongly cospectral, and we must have $4e_y-2\ell \neq 0$ since  
$\Phi_{g,gz}^+=\{4e_y-2\ell : y \in (\Z/2\Z)^{2n} \}$ and $\Phi_{g,gz}^- = \{0\}$ are disjoint. This gives (i). For (ii), note that the valuation criteria of Theorem~\ref{thmcoutinhopst} (iii)-(iv) tell us
\begin{equation*}
	\begin{split}
		\nu_2(2\ell- 4e_y+2\ell) & > \nu_2(2\ell) \\
		\nu_2(4\ell-4e_y) & > \nu_2(2\ell) \\
	\end{split}
\end{equation*}
	as claimed. 
	
	Conversely, if (i) and (ii) hold then (i) implies that the vertices $g$ and $gz$ are strongly cospectral and (ii) gives the required $2$-valuation criteria. 
\end{proof}

	As with Lemma~\ref{pstz}, we note that in Lemma~\ref{pstnoz}, (ii) implies (i). We summarize the statements of Lemma~\ref{pstz} and Lemma~\ref{pstnoz} into one theorem.

\begin{theorem}\label{pstmain}
	In the notation of Theorem~\ref{spec}, the graph $X=\Cay(G,S)$ admits perfect state transfer at time $\tau$ if and only if either of the following hold:
 \begin{enumerate}[label=(\roman*)]
 	\item $z \in S$ and $\nu_2(\ell - e_y) \geq \nu_2(\ell+1)$ for all $y \in (\Z/2\Z)^{2n}$; or
 	\item $z \notin S$ and  $\nu_2(\ell-e_y) \geq \nu_2(\ell)$ for all $y \in (\Z/2\Z)^{2n}$.
 \end{enumerate}
 If (i) holds then $\tau$ is an odd multiple of $\pi/d$, where \[d= \gcd \big( \{2\ell+2\} \cup \{ 4e_y -4\ell : y \in (\Z/2\Z)^{2n}\} \big)\] and if (ii) holds then $\tau$ is an odd multiple of $\pi/c$, where
\[c= \gcd \big( \{2\ell\} \cup \{ 4e_y -4\ell : y \in (\Z/2\Z)^{2n}\} \big). \] \qed
\end{theorem}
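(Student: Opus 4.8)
The plan is to assemble the statement from the pieces already established. The two biconditional characterizations require essentially no new work: for $z\in S$ it is exactly Lemma~\ref{pstz}, and for $z\notin S$ it is exactly Lemma~\ref{pstnoz}, once we recall the observations made right after those lemmas that condition (ii) implies condition (i) in each case. So the only genuinely new content of Theorem~\ref{pstmain} is the description of the perfect state transfer times, and for this I would invoke the ``moreover'' part of Theorem~\ref{thmcoutinhopst}.

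By Theorem~\ref{thmcoutinhopst}(b), every time $\tau$ at which $X$ admits perfect state transfer (from $g$ to $gz$) is an odd multiple of the minimum time $\tau_0=\pi/f$, where $f=\gcd\{\theta_0-\theta:\theta\in\Phi_g\}$. First I would note that by Theorem~\ref{eigensupport} the eigenvalue support $\Phi_g$ is the entire spectrum of $X$; since $X$ is connected and $|S|$-regular, its largest eigenvalue is $|S|$, and because $\theta_0\in\Phi_g=\Spec(X)$ the quantity $\gcd\{\theta_0-\theta:\theta\in\Phi_g\}$ equals the gcd of all pairwise differences of eigenvalues, so it is immaterial which eigenvalue is taken as the base point. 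In particular we may take $\theta_0=|S|$ and compute $f=\gcd\{|S|-\theta:\theta\in\Spec(X)\}$.

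It then remains to read off this gcd from the explicit spectrum in Theorem~\ref{spec}. When $z\in S$ we have $|S|=2\ell+1$ and $\Spec(X)=\{4e_y-2\ell+1:y\in(\Z/2\Z)^{2n}\}\cup\{-1\}$, so the differences $|S|-\theta$ range over $\{4\ell-4e_y:y\in(\Z/2\Z)^{2n}\}\cup\{2\ell+2\}$, and since $\gcd$ is insensitive to sign this is $d$. When $z\notin S$ we have $|S|=2\ell$ and $\Spec(X)=\{4e_y-2\ell:y\in(\Z/2\Z)^{2n}\}\cup\{0\}$, so the differences range over $\{4\ell-4e_y:y\in(\Z/2\Z)^{2n}\}\cup\{2\ell\}$, which is $c$. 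Substituting into $\tau_0=\pi/f$ gives the two stated expressions, and Theorem~\ref{thmcoutinhopst}(b) then gives that $\tau$ is an odd multiple of $\tau_0$.

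I do not expect any real obstacle here: the argument is bookkeeping on top of Lemmas~\ref{pstz} and~\ref{pstnoz} and Theorem~\ref{thmcoutinhopst}. The one point that deserves an explicit line of justification is the reduction of $\gcd\{\theta_0-\theta:\theta\in\Phi_g\}$ to $\gcd\{|S|-\theta:\theta\in\Spec(X)\}$, which relies on $X$ being connected and regular together with Theorem~\ref{eigensupport}; everything after that is a direct substitution of the spectrum.
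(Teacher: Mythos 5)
Your proposal is correct and matches the paper's (implicit) argument: the paper states Theorem~\ref{pstmain} as a summary of Lemmas~\ref{pstz} and~\ref{pstnoz}, with the times read off from Theorem~\ref{thmcoutinhopst}(a)--(b) exactly as you do, using Theorem~\ref{eigensupport} to identify $\Phi_g$ with $\Spec(X)$ and Theorem~\ref{spec} to compute $f=\gcd\{|S|-\theta:\theta\in\Spec(X)\}$ as $d$ or $c$ up to sign. No gaps.
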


	Clearly the conditions of Theorem~\ref{pstmain} hold when $z \in S$ and $\ell$ is even, or when $z \notin S$ and $\ell$ is odd. In the case where $z \notin S$ and $\Cay(G,S)$ admits perfect state transfer, then the transfer occurs between nonadjacent vertices $g$ and $gz$, where $g \in G$. 
	
	In this case, we remark that the distance between $g$ and $gz$ is two. For if $x$ is a noncentral element in $S$ then $x$ has order two or four. Since $S$ is closed under conjugation then $xz$ is in $S$ as well. If $x$ has order two then there is a path $g, gx, gx(xz)=gz$ in $\Cay(G,S)$. If $x$ has order four then $x^2=z$ and we have a path $g,gx,gx^2=gz$. 

	We can say more about the minimum times $\pi/d$ and $\pi/c$ given in Theorem~\ref{pstmain}. Namely, we will show that both the integers $c$ and $d$ are powers of $2$. A similar result was given in \cite[Lemma 4.2]{TFC}, although we we will prove a more general result here. The proof makes use of some results from the representation theory of general linear groups, which we will review now. We refer the reader to \cite[Section 3(C)]{M} for more details. Let $P$ denote the points of  $\PG(m-1,q)$ for some prime power $q$ of a prime $p$,
and let $F=\F_r$ for a prime $r\neq p$. The $F$-vector space $F^P$ of $F$-valued functions of $P$ has a basis of characteristic functions of points, where for any subset $A$ of $P$ the characteristic function of $A$ is the function $\delta_A : P \rightarrow F$ taking the value $1$ on elements of $A$ and $0$ elsewhere.
	
The general linear group $\textnormal{GL}(m,q)$ acts on $F^P$ by the action defined on the characteristic functions of points
	\[ M \cdot \delta_{\{a\}}(x)=\delta_{\{a\}}(xM), \ M \in \textnormal{GL}(m,q)   \]
	and extending linearly, making $F^P$ a module for the group algebra $F \textnormal{GL}(m,q)$. The structure of this module is well-known and depends on the choice of prime $r$. When $r$ does not divide $\abs{P}$, then $F^P$ decomposes as the direct sum 
\begin{equation}\label{pnotdivideorder}
	F \mathbf{1} \oplus W,
\end{equation}
	where $\mathbf{1}$ is the characteristic function of $P$, and $W$ is the irreducible submodule spanned by the functions $f \in F^P$ satisfying $\sum_{x \in P} f(x) = 0$, referred to as the \textit{augmentation submodule}. When $r$ divides $\abs{P}$, then $F^P$ has a composition series 
\begin{equation}\label{pdivideorder}
	0 \subseteq F \mathbf{1} \subseteq W \subseteq F^P,
\end{equation}
	where $W$ and  $F \mathbf{1}$ are defined in the same way as in the previous case. 
	
	With this background in mind we can now turn to a preliminary lemma and its proof.	

\begin{lemma}\label{power2lem}
	Let $Y$ be a proper, nonempty subset of $P$, and suppose that $C$ is a $\textnormal{GL}(m,q)$-invariant class of subsets of $P$ such that $C$ contains at least one nonempty proper subset. Then the integer 
	\[ \gcd \big( |Y \cap H| : H \in C  \big) \]
	is a power of $p$. 
\end{lemma}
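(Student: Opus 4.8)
The plan is to prove that no prime $r$ different from $p$ divides the integer $g:=\gcd\big(\abs{Y\cap H}:H\in C\big)$, and separately that $g\ne 0$; together these force $g=p^k$ for some $k\ge 0$. That $g\ne 0$ is easy: fixing a nonempty proper subset $H_0\in C$, picking $a\in H_0$ and $b\in Y$, and using the transitivity of $\textnormal{GL}(m,q)$ on $P$ to find $M$ with $aM=b$, we get $b\in Y\cap H_0M$ and $H_0M\in C$ by invariance, so some term in the gcd is positive.

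Then I would fix a prime $r\ne p$ and pass to $F=\F_r$, so that the module theory recalled above applies. Since $C$ is $\textnormal{GL}(m,q)$-invariant it contains the entire orbit of $H_0$, so it is enough to exhibit some $M$ with $r\nmid\abs{Y\cap H_0M}$. I would argue by contradiction, assuming $r\mid\abs{Y\cap H_0M}$ for every $M\in\textnormal{GL}(m,q)$. Equipping $F^P$ with the standard $\textnormal{GL}(m,q)$-invariant symmetric bilinear form $\langle f,h\rangle=\sum_{x\in P}f(x)h(x)$, this assumption reads $\langle\delta_Y,\delta_{H_0M}\rangle=0$ for all $M$. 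As $M$ ranges over $\textnormal{GL}(m,q)$ the functions $\delta_{H_0M}$ sweep out the $\textnormal{GL}(m,q)$-orbit of $\delta_{H_0}$, so $\delta_Y$ lies in $U^\perp$, where $U$ is the $F\,\textnormal{GL}(m,q)$-submodule of $F^P$ generated by $\delta_{H_0}$.

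The key point is then to pin down $U$. Because $H_0$ is a proper nonempty subset of $P$, the function $\delta_{H_0}$ takes both of the values $0$ and $1$ in $F$ and so is not a scalar multiple of $\mathbf 1$; hence $U$ is a nonzero submodule of $F^P$ not contained in $F\mathbf 1$. Invoking the structure of $F^P$ recalled above --- the splitting (\ref{pnotdivideorder}) when $r\nmid\abs{P}$, and the (uniserial) composition series (\ref{pdivideorder}) when $r\mid\abs{P}$, each of which leaves only $0$, $F\mathbf 1$, $W$, $F^P$ as submodules --- any such $U$ must contain the augmentation submodule $W$. Since the form is non-degenerate and $W=\mathbf 1^\perp$, it follows that $\delta_Y\in U^\perp\subseteq W^\perp=F\mathbf 1$, i.e. $\delta_Y$ is constant on $P$; but $Y$ is proper and nonempty, a contradiction. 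Thus for each prime $r\ne p$ some $\abs{Y\cap H}$ with $H\in C$ escapes divisibility by $r$, so $r\nmid g$, whence $g$ is a power of $p$.

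The main obstacle is the structural input used in the previous paragraph: everything hinges on knowing the full submodule lattice of the cross-characteristic permutation module $F^P$ of $\textnormal{GL}(m,q)$ on the points of $\PG(m-1,q)$ --- concretely, that $W$ is the unique minimal submodule not contained in $F\mathbf 1$, equivalently that $F^P$ is uniserial when $r\mid\abs{P}$. This is precisely the fact to be drawn from \cite[Section 3(C)]{M}; granting it, the remainder is elementary linear algebra together with some bookkeeping about the group action.
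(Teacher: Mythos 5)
Your proof is correct and follows essentially the same route as the paper: assume a prime $r\neq p$ divides every $\abs{Y\cap H}$, deduce that $\delta_Y$ lies in the orthogonal complement of a $\textnormal{GL}(m,q)$-submodule $U\not\subseteq F\mathbf 1$, use the structure of $F^P$ from (\ref{pnotdivideorder})--(\ref{pdivideorder}) to get $W\subseteq U$ and hence $\delta_Y\in F\mathbf 1$, contradicting that $Y$ is proper and nonempty. The only (immaterial) differences are that you generate $U$ by a single $\delta_{H_0}$ while the paper spans over all $H\in C$, and that you separately rule out the gcd being $0$, which the paper's argument handles implicitly.
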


\begin{proof}
	Suppose that $r$ is some prime not equal to $p$ which divides $|Y \cap H|$ for all $H \in C$. Let $F=\F_r$, and consider the $F$-subspace $U = \langle \delta_H : H \in C \rangle$ of $F^P$. Note that $U \neq F \mathbf{1}$. For $H \in C$, the dot product of $\delta_H$ with $\delta_Y$ is $|Y \cap H|$. Since $r$ divides $|Y \cap H|$ for each $H \in C$ then this implies that $\delta_Y$ is in $U^\perp$.
	
	If $r$ does not divide $\abs{P}$, then following the decomposition of
equation~(\ref{pnotdivideorder}) $U$ must contain the submodule $W$ since $U \neq F \mathbf{1}$.
But this implies that $U^\perp \subseteq F \mathbf{1}$, so that $\delta_Y$ is constant. This contradicts the hypothesis that $Y$ is a proper subset of $P$, and hence $r$ cannot be distinct from $p$.
If $r$ divides $\abs{P}$, then since $U \neq F\mathbf{1}$ the composition series of
equation~(\ref{pdivideorder}) implies that $W \subseteq U$. But then we get that $U^\perp \subseteq W^\perp =F \mathbf{1}$, hence $\delta_Y$ is in $F \mathbf{1}$. This contradicts the assumption that $Y$ is a proper subset of $P$, and again $r$ cannot be distinct from $p$.
	 From both of these cases we conclude that $\gcd(|Y \cap H| : H \in C)$ is a power of $p$, as claimed.  
\end{proof}

	From Lemma~\ref{power2lem} it now easily follows that the integers $c$ and $d$ from Theorem~\ref{pstmain} are powers of $2$.

\begin{theorem}\label{power2}
	In the notation of Theorem~\ref{pstmain}, the integers
	\[ d=\gcd \big( \{2\ell+2\} \cup \{ 4e_y -4\ell : y \in (\Z/2\Z)^{2n}\}\big) \]
	and
	\[ c=\gcd\big( \{2\ell\} \cup \{ 4e_y -4\ell : y \in (\Z/2\Z)^{2n}\}\big) \]
	are both powers of $2$.
\end{theorem}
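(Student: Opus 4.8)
The plan is to recognize the numbers $4e_y-4\ell$ as (four times) intersection sizes of a fixed point set with complements of projective hyperplanes, and then to apply Lemma~\ref{power2lem} with $p=q=2$ and $P=\PG(2n-1,2)$. Write $S\setminus\{z\}=K(x_1)\cup\cdots\cup K(x_\ell)$ as in Theorem~\ref{spec}; the $K(x_i)$ are distinct conjugacy classes of size two, so their images $\bar x_1,\dots,\bar x_\ell$ are distinct nonzero vectors of $(\Z/2\Z)^{2n}$, and $Y:=\{\bar x_1,\dots,\bar x_\ell\}$ is a set of $\ell$ points of $P$. Since $S$ generates $G$, these vectors span $(\Z/2\Z)^{2n}$, so $\ell\ge 2n\ge 2$ and $Y\neq\emptyset$. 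For a nonzero $y\in(\Z/2\Z)^{2n}$, the set $H_y=\{v\in P:y\cdot v=0\}$ is a hyperplane of $P$ and, directly from the definition of $e_y$, we have $e_y=|Y\cap H_y|$; writing $A_y=P\setminus H_y$ for its complement, $\ell-e_y=|Y\cap A_y|$. Over $\F_2$ the hyperplanes of $P$ are exactly the $H_y$ with $y\neq 0$, so the collection $C$ of all complements of hyperplanes of $P$ equals $\{A_y:y\neq 0\}$; since $\textnormal{GL}(2n,2)$ permutes the hyperplanes of $P$ it permutes their complements, so $C$ is a $\textnormal{GL}(2n,2)$-invariant class of subsets of $P$, each member having $2^{2n-1}$ points and hence being a nonempty proper subset.

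Suppose first that $Y\neq P$, so that $Y$ is a proper nonempty subset of $P$. Then Lemma~\ref{power2lem} applies and shows that $g:=\gcd\big(|Y\cap H|:H\in C\big)=\gcd\big(\ell-e_y:y\neq 0\big)$ is a power of $2$. Consequently $\gcd\{4e_y-4\ell:y\in(\Z/2\Z)^{2n}\}=4g$ is a power of $2$ (the term $y=0$ contributes $0$ and is irrelevant), and since both $d=\gcd(\{2\ell+2\}\cup\{4e_y-4\ell:y\})$ and $c=\gcd(\{2\ell\}\cup\{4e_y-4\ell:y\})$ divide $4g$, each is a divisor of a power of $2$ and hence itself a power of $2$. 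It remains to treat the case $Y=P$: here $\ell=|P|=2^{2n}-1$ and $e_y=|H_y|=2^{2n-1}-1$ for every $y\neq 0$, so $4e_y-4\ell=-2^{2n+1}$, whence $d=\gcd(2^{2n+1},2^{2n+1})=2^{2n+1}$ and $c=\gcd(2^{2n+1}-2,2^{2n+1})=2$, again powers of $2$.

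I expect the only real difficulty to be the translation in the first paragraph, namely checking carefully that $e_y$ is literally the number of points of $Y$ lying on the projective hyperplane $H_y$ (so that $\ell-e_y$ counts the points of $Y$ off it); once this identification is in place, Lemma~\ref{power2lem} applies essentially verbatim with $C$ taken to be the orbit of hyperplane complements. The boundary case $Y=P$ (equivalently $S\supseteq G\setminus Z$, which can only occur when $z\notin S$) is the one configuration not covered by the lemma, and it is settled by the elementary computation above.
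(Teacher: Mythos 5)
Your proposal is correct and follows essentially the same route as the paper: identify $\ell-e_y$ with $|Y\cap H|$ for $Y=\{\bar x_1,\dots,\bar x_\ell\}$ and $H$ ranging over the $\textnormal{GL}(2n,2)$-invariant class of affine complements of hyperplanes $\{x: y\cdot x=1\}$, apply Lemma~\ref{power2lem}, and note that $d$ and $c$ divide the resulting power of $2$. Your separate treatment of the boundary case $Y=P$ (i.e.\ $S=G\setminus Z$), where the lemma's hypothesis that $Y$ be proper fails, is a point the paper's proof passes over silently, and your direct computation there is correct.
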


\begin{proof}
	We show that the integer 
	\[ M = \gcd \big( \ell - e_y : y \in (\Z/2\Z)^{2n} \big)\]
	is a power of $2$, from which it follows that $d$ and $c$ are both powers of $2$. Let $C$ be the class of subsets
	\[ C = \Big \{ \{ x : y \cdot x =1 \} : y \in \F_2^{2n} \setminus \{0\}   \Big \} \]
	of $\PG(2n-1,2)$. Then $C$ is $\textnormal{GL}(2n,2)$-invariant, and so the integer
	\[ N=\gcd( |\{\bar{x}_1,...,\bar{x}_\ell\} \cap H | : H \in C ) \] 
	is a power of $2$ by Lemma~\ref{power2lem}. If $H$ is the subset $\{ x : y \cdot x =1 \}$ then $|\{\bar{x}_1,...,\bar{x}_\ell\} \cap H | = \ell - e_y$ so that $N=M$ and therefore $M$ is a power of $2$. 
\end{proof}

	From Theorem~\ref{power2} we can simplify the minimum time for perfect state transfer given in Theorem~\ref{pstmain}. We state the simplified theorem for the convenience of the reader. 
	
\begin{theorem}\label{pstmain2}
	In the notation of Theorem~\ref{spec}, the graph $X=\Cay(G,S)$ admits perfect state transfer at time $\tau$ if and only if either of the following hold:
 \begin{enumerate}[label=(\roman*)]
 	\item $z \in S$ and $\nu_2(\ell - e_y) \geq \nu_2(\ell+1)$ for all $y \in (\Z/2\Z)^{2n}$; or
 	\item $z \notin S$ and  $\nu_2(\ell-e_y) \geq \nu_2(\ell)$ for all $y \in (\Z/2\Z)^{2n}$.
 \end{enumerate}
 	If (i) holds then $\tau$ is an odd multiple of $\pi/2^{\nu_2(2\ell +2)}$ and if (ii) holds then $\tau$ is an odd multiple of $\pi/2^{\nu_2(2\ell)}$. \qed
\end{theorem}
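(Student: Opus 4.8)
The plan is to observe that Theorem~\ref{pstmain2} is a restatement of Theorem~\ref{pstmain}: the equivalence between admitting perfect state transfer and the valuation conditions (i)/(ii) is exactly Theorem~\ref{pstmain}, as is the fact that $\tau$ is an odd multiple of $\pi/d$ (resp.\ $\pi/c$). So the only new content is the identities $d=2^{\nu_2(2\ell+2)}$ under (i) and $c=2^{\nu_2(2\ell)}$ under (ii), and both follow at once from Theorem~\ref{power2} together with the valuation hypotheses.

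First I would handle $z\in S$. By Theorem~\ref{power2}, $d$ is a power of $2$, so $d=2^{\nu_2(d)}$, where $\nu_2(d)=\min\big(\nu_2(2\ell+2),\ \min_y\nu_2(4e_y-4\ell)\big)$. For $y$ with $e_y\neq\ell$ one has $\nu_2(4e_y-4\ell)=2+\nu_2(\ell-e_y)$, while the remaining $y$ contribute a zero term that drops out of the gcd; also $\nu_2(2\ell+2)=1+\nu_2(\ell+1)$. Condition (i) gives $\nu_2(\ell-e_y)\ge\nu_2(\ell+1)$, whence $\nu_2(4e_y-4\ell)\ge 2+\nu_2(\ell+1)=1+\nu_2(2\ell+2)>\nu_2(2\ell+2)$. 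Therefore the minimum defining $\nu_2(d)$ is attained by the term $2\ell+2$, so $\nu_2(d)=\nu_2(2\ell+2)$ and $d=2^{\nu_2(2\ell+2)}$; substituting into the conclusion of Theorem~\ref{pstmain} gives that $\tau$ is an odd multiple of $\pi/2^{\nu_2(2\ell+2)}$. The case $z\notin S$ is verbatim with $\ell+1$ replaced by $\ell$: condition (ii), $\nu_2(\ell-e_y)\ge\nu_2(\ell)$, yields $\nu_2(4e_y-4\ell)\ge 2+\nu_2(\ell)=1+\nu_2(2\ell)>\nu_2(2\ell)$, so $c=2^{\nu_2(2\ell)}$.

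There is no real obstacle here; the one point to be careful about is the degenerate term $4e_y-4\ell=0$. One can note that $\ell\ge 1$ (since $S$ generates $G$ and $n\ge 1$) and that not every $e_y$ can equal $\ell$ (that would force every $\bar x_i=0$, contradicting the noncentrality of the $x_i$), so at least one nonzero term appears in the gcd; but even without this remark the two-sided divisibility argument closes the gap cleanly: $d\mid 2\ell+2$ forces $d\mid 2^{\nu_2(2\ell+2)}$, while the valuation inequality just established shows $2^{\nu_2(2\ell+2)}$ divides every generator of the gcd, hence $2^{\nu_2(2\ell+2)}\mid d$; equality follows, and the argument for $c$ is identical.
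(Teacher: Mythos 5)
Your proposal is correct and follows the same route the paper intends: Theorem~\ref{pstmain2} is Theorem~\ref{pstmain} with the gcds $d$ and $c$ evaluated, and the paper's implicit justification (hence the \qed with no separate proof) is exactly your combination of Theorem~\ref{power2} (so $d$ and $c$ are powers of $2$) with the valuation hypotheses (so $\nu_2(4e_y-4\ell)>\nu_2(2\ell+2)$, resp.\ $>\nu_2(2\ell)$, forcing $\nu_2(d)=\nu_2(2\ell+2)$ and $\nu_2(c)=\nu_2(2\ell)$). Your care with the degenerate terms $4e_y-4\ell=0$ and the two-sided divisibility argument are welcome details that the paper leaves to the reader.
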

	
We can also show that when $\Cay(G,S)$ is not the complete graph, both $\Cay(G,S)$ and its complement admit perfect state transfer at the same time. For the following theorem
we let $S^c$ denote the complement of the subset $S$ in $G \setminus \{1\}$, and we will let $\bar{A}$ be the adjacency matrix of $\Cay(G,S^c)$. 

\begin{theorem}\label{comppst}
Let $G$ be any finite group. Suppose the normal Cayley graph $X=\Cay(G,S)$ admits perfect state transfer from $g$ to $gz$, for all $g \in G$, for some central involution $z \in G$, and at time $\tau$. Then $\bar{X} = \Cay(G,S^c)$ admits perfect state transfer from $g$ to $gz$ for all $g \in G$ and at time $\tau$ if and only if $|G|\tau$ is an integer multiple of $2 \pi$.
\end{theorem}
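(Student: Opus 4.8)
The plan is to write the transition matrix of $\bar X$ as a rank-one modification of a scalar multiple of $U_A(\tau)^{-1}$, the correction living in the trivial-character eigenspace, and then determine when that correction is harmless. Let $J$ be the all-ones matrix, so that the adjacency matrix of $\bar X$ is $\bar A = J - I - A$. Since $X$ is $|S|$-regular we have $AJ = JA = |S|\,J$, hence $A$, $\bar A$, $I$, $J$ pairwise commute; moreover $\tfrac1{|G|}J$ is idempotent and $J^n = |G|^{n-1}J$ for $n\ge 1$, whence $e^{i\tau J} = I + (e^{i\tau |G|}-1)\tfrac1{|G|}J$. Using commutativity and $e^{-i\tau A} = U_A(\tau)^{-1}$,
\[
 U_{\bar A}(\tau) \;=\; e^{\,i\tau(J - I - A)} \;=\; e^{-i\tau}\Bigl(I + \tfrac{e^{i\tau|G|}-1}{|G|}\,J\Bigr)\,U_A(\tau)^{-1}.
\]

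Next I would invoke the hypothesis. Since $X$ lies in the association scheme of $G$ and admits perfect state transfer at time $\tau$, Theorem~\ref{GCGV} gives $U_A(\tau) = \lambda T$ for a scalar $\lambda$ with $|\lambda| = 1$ (by unitarity) and a permutation matrix $T$ that is a class of the scheme; comparing with the hypothesis $U_A(\tau)e_g \in \C\,e_{gz}$ forces $T e_g = e_{gz}$ for every $g$, and $T^2 = I$. Hence $U_A(\tau)^{-1} = \bar\lambda\,T$, and since $T$ is a permutation matrix $JT = J$, so
\[
 U_{\bar A}(\tau) = e^{-i\tau}\bar\lambda\Bigl(T + \tfrac{e^{i\tau|G|}-1}{|G|}\,J\Bigr),
 \qquad
 U_{\bar A}(\tau)e_g = e^{-i\tau}\bar\lambda\,e_{gz} + e^{-i\tau}\bar\lambda\,\tfrac{e^{i\tau|G|}-1}{|G|}\,\mathbf 1,
\]
where $\mathbf 1$ is the all-ones vector.

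Finally, since $|G|\ge 2$ the vector $\mathbf 1$ is not a scalar multiple of $e_{gz}$, so $U_{\bar A}(\tau)e_g$ is a scalar multiple of $e_{gz}$ precisely when $\tfrac{e^{i\tau|G|}-1}{|G|} = 0$, that is, when $e^{i\tau|G|} = 1$, i.e.\ $|G|\tau \in 2\pi\Z$. In that case $U_{\bar A}(\tau)e_g = e^{-i\tau}\bar\lambda\,e_{gz}$ for all $g$, with unimodular phase $e^{-i\tau}\bar\lambda$, which is perfect state transfer from $g$ to $gz$; conversely, perfect state transfer for $\bar X$ from $g$ to $gz$ forces the coefficient to vanish by the second displayed identity. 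I do not expect a genuine obstacle here: the computation is routine, and the one point that needs care is the use of Theorem~\ref{GCGV} to extract a single phase $\lambda$ valid for all $g$ simultaneously, which is what upgrades the relation between $U_{\bar A}(\tau)$ and $U_A(\tau)^{-1}$ to the clean global matrix identity above rather than a column-by-column statement.
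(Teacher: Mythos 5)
Your proposal is correct and follows essentially the same route as the paper: both reduce to $\bar A = J - I - A$, use commutativity and the identity $U_A(\tau)=\lambda T$ from Theorem~\ref{GCGV}, and observe that perfect state transfer for $\bar X$ holds exactly when the exponential of the $J$-part acts as a scalar, which happens iff $e^{i\tau|G|}=1$. The only cosmetic difference is that you expand $e^{i\tau J}$ via $J^n=|G|^{n-1}J$ while the paper uses the spectral decomposition of $J-I$ with eigenvalues $|G|-1$ and $-1$; these are the same computation.
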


\begin{proof}
Since $X$ admits perfect state transfer, then from Theorem~\ref{thmcoutinhopst} we can write the transition matrix of $X$ at the time $\tau$ as
\[ e^{i\tau A} = \lambda T \]
for some $\lambda \in \C$, where $T$ is the regular representation of $z$. The adjacency matrix of the complementary graph $\bar{X}$ is related to $A$ by 
\[ \bar{A}=J-I-A. \]
Since $T$ has order two, then the transition matrix for $\bar{X}$ at time $\tau$ is
\[ e^{i \tau \bar{A}} = \lambda^{-1} e^{i \tau (J-I)}T.  \]
Therefore $\bar{X}$ admits perfect state transfer if and only if $e^{i\tau (J-I)}=\mu I$ for some $\mu \in \C$. The eigenvalues of $J-I$ are $|G|-1$ and $-1$, so if $J-I$ has spectral decomposition $(|G|-1)E_1-E_2$, then 
\[ e^{i\tau (J-I)}=e^{i\tau(|G|-1)}E_1 +e^{-i \tau}E_2. \]
It then follows that $e^{i\tau (J-I)}=\mu I$ if and only if $e^{i\tau(|G|-1)}=e^{-i \tau}$, if and only if $|G|\tau$ is an integer multiple of $2\pi$.
\end{proof}

\begin{corollary}
Let $G$ be an extraspecial $2$-group of order $2^{2n+1}$, and suppose that $S$ is a proper subset of $G \setminus \{1\}$. Then $X=\Cay(G,S)$ admits perfect state transfer at time $\tau$ if and only if the complementary graph $\bar{X}=\Cay(G,S^c)$ admits perfect state transfer at time $\tau$.
\end{corollary}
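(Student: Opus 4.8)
The plan is to deduce the corollary from Theorem~\ref{comppst}. By Theorem~\ref{GCGV}, any perfect state transfer in a normal Cayley graph of $G$ must run from $g$ to $gz$ for every $g$, where $z$ is the unique central involution of $G$; hence the hypothesis of Theorem~\ref{comppst} is automatically satisfied whenever $\Cay(G,S)$ admits perfect state transfer, and it is likewise satisfied, with $S$ replaced by $S^c$, whenever $\Cay(G,S^c)$ does (this graph also lies in the conjugacy-class scheme of $G$, so Theorem~\ref{GCGV} again forces transfer from $g$ to $gz$). Theorem~\ref{comppst} then says: if $X=\Cay(G,S)$ admits perfect state transfer at time $\tau$, then $\bar X=\Cay(G,S^c)$ does too precisely when $|G|\tau\in 2\pi\Z$, and, applying the same theorem with $S$ replaced by $S^c$, if $\bar X$ admits perfect state transfer at time $\tau$ then so does its complement $X$ precisely when $|G|\tau\in 2\pi\Z$. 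So the corollary follows once I show that whenever one of $\Cay(G,S)$, $\Cay(G,S^c)$ admits perfect state transfer at time $\tau$, the number $|G|\tau$ is an integer multiple of $2\pi$.

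For this I would invoke the minimum-time statement of Theorem~\ref{pstmain2}. Both the characterization and the formula for the minimum time there rest only on the spectrum given by Theorem~\ref{spec} and on the graph lying in the conjugacy-class association scheme of $G$; neither requires the connecting set to generate $G$, so the statement applies verbatim to $S^c$ (a union of conjugacy classes, automatically inverse-closed since every conjugacy class of $G$ is) as well as to $S$. Thus $\tau$ is an odd multiple of $\pi/2^{M}$, with $M=\nu_2(2\ell+2)$ if $z$ lies in the connecting set and $M=\nu_2(2\ell)$ if it does not, where $\ell$ is the number of size-two conjugacy classes contained in that set. Now $G$ has exactly $2^{2n}-1$ conjugacy classes of size two, namely the nonidentity cosets of $Z$. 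Since $S$ generates $G$ it contains at least one of these, and since $S$ is a proper subset of $G\setminus\{1\}$ it omits at least one element of $G\setminus\{1\}$; running through the four cases, one checks that $0\le\ell\le 2^{2n}-2$ whenever the connecting set contains $z$, and $1\le\ell\le 2^{2n}-1$ whenever it does not. In the first case $1\le\ell+1<2^{2n}$ gives $\nu_2(\ell+1)\le 2n-1$ and hence $M=1+\nu_2(\ell+1)\le 2n$; in the second, $1\le\ell<2^{2n}$ gives $\nu_2(\ell)\le 2n-1$ and hence $M=1+\nu_2(\ell)\le 2n$. Writing $\tau=(2k+1)\pi/2^{M}$ and using $|G|=2^{2n+1}$, we get $|G|\tau/(2\pi)=2^{2n-M}(2k+1)\in\Z$ since $M\le 2n$, which is exactly what was needed.

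The step I expect to require the most care is justifying the use of Theorem~\ref{pstmain2} for the complementary connecting set $S^c$, which need not generate $G$; here one simply notes that the arguments of Sections~3 and~4 use only that the connecting set is a union of conjugacy classes of $G$. The other thing to double-check is the extremal counting configurations, such as $S=G\setminus\{1,z\}$ (so that $S^c=\{z\}$ and $\bar X$ is a perfect matching), where the bound on $\ell$ is tightest: this is precisely the situation in which it matters that the exponent is $\nu_2(2\ell+2)$ when $z$ belongs to the connecting set but $\nu_2(2\ell)$ when it does not, and the case split above is arranged exactly so as to handle it. With $|G|\tau\in 2\pi\Z$ established in all cases, two applications of Theorem~\ref{comppst} — one to the pair $(X,\bar X)$ and one to the pair $(\bar X,X)$ — give both directions of the equivalence.
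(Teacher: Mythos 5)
Your proposal is correct and follows essentially the same route as the paper: reduce via Theorem~\ref{comppst} to checking that $|G|\tau$ is an integer multiple of $2\pi$, then use the minimum-time formula of Theorem~\ref{pstmain2} together with the bound on $\ell$ coming from $S$ being a proper subset of $G\setminus\{1\}$. Your added remarks (handling both directions explicitly and noting that the spectral results apply to $S^c$ even if it fails to generate $G$) are sound refinements of the paper's argument rather than a different method.
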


\begin{proof}
	Suppose that $X=\Cay(G,S)$ admits perfect state transfer at time $\tau$ and that $z \in S$. From Theorem~\ref{thmcoutinhopst}, it suffices to check that the theorem is true when $\tau$ equals the minimum time $\tau_0$, so we assume that $\tau = \tau_0$.  Following Theorem~\ref{comppst}, we show that $2^{2n+1}\tau$ is an integer multiple of $2\pi$. From Theorem~\ref{pstmain2} the minimum time is $\tau =\pi/2^{\nu_2(2\ell+2)}$. Since $S$ is a proper subset of $G \setminus \{1\}$, then $\ell$ is at most $2^{2n}-2$, and hence $\ell + 1 \leq 2^{2n}-1$. Thus we have
	\[ 2^{\nu_2(2\ell+2)} \leq 2\ell + 2 \leq 2^{2n+1}-2 \]
	which implies $\nu_2(2\ell+2) < 2n+1$. Therefore $2^{2n+1}\pi/2^{\nu_2(2\ell+2)}$ is indeed an integer multiple of $2\pi$, as claimed. 
	
	Suppose now that $\Cay(G,S)$ admits perfect state transfer at time $\tau$ and that $z \notin S$. Arguing similarly to the case where $z \in S$, we know that $\tau=\pi /2^{\nu_2(2\ell)}$.
	Again we get the inequalities
	\[ 2^{\nu_2(2\ell)} \leq 2 \ell \leq 2^{2n+1}-2 \]
	so that $\nu_2(2\ell) < 2n+1$. Therefore $2^{2n+1}\pi/2^{\nu_2(2\ell)}$ is an integer multiple of $2\pi$.
\end{proof}

	As we noted after Theorem~\ref{pstmain}, there are two particularly simple choices of connecting set $S$ where $\Cay(G,S)$ admits perfect state transfer. Namely, we can choose $S$ so that $z \in S$ and $\ell$ is even, or choose $S$ with $z \notin S$ and $\ell$ is odd. We have  less obvious examples where $S$ does not contain $z$ and $\ell$ is even. By taking complements, it also gives graphs admitting perfect state transfer whose connecting set contains $z$ and $\ell$ is odd.
	
	A \textit{partial k-spread} in $\F_q^n$ is a set of $k$-dimensional subspaces of $\F_q^n$ such that any two intersect in the zero subspace. Elementary counting shows that a partial $k$-spread has at most $(q^n-1)/(q^k-1)$ elements. 
	
\begin{theorem}\label{spread}
	Let $T$ be a partial $k$-spread in $\F_2^{2n}$ with $N$ elements, and assume that the points of $T$ span $\F_2^{2n}$. In the notation of Theorem~\ref{spec}, let $X=\Cay(G,S)$ with $S= K(x_1) \cup\cdots\cup K(x_\ell)$ such that $\bar{S}$ is the set of points incident to the elements of $T$. If $\nu_2(N) \leq k-1$, then $X$ is a connected graph which admits perfect state transfer with minimum time $\pi /2^{\nu_2(2N)}$.
	
	In particular, when $T$ is a partial $n$-spread for $n \geq 2$ and $N=2^{n-1}$ then $X$ is a connected graph which admits perfect state transfer with minimum time $\pi/2^n$. 
\end{theorem}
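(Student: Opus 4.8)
The plan is to verify the criterion for perfect state transfer recorded in Theorem~\ref{pstmain2}(ii) for this particular choice of $S$. Write $T=\{W_1,\dots,W_N\}$, where each $W_j$ is a $k$-dimensional subspace of $\F_2^{2n}$ and $W_i\cap W_j=\{0\}$ for $i\neq j$. Since $\bar S$ is the set of points lying on the members of $T$ while $\bar z=0$, we have $z\notin S$, so only case (ii) of Theorem~\ref{pstmain2} is relevant. Because the punctured subspaces $W_j\setminus\{0\}$ are pairwise disjoint, $\bar S=\bigcup_{j=1}^{N}(W_j\setminus\{0\})$ has exactly $\ell=N(2^k-1)$ elements; as $2^k-1$ is odd, $\nu_2(\ell)=\nu_2(N)$ and $\nu_2(2\ell)=\nu_2(2N)$.

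The heart of the argument is a local count. Fix a nonzero $y\in\F_2^{2n}$ (the case $y=0$ is trivial, since then $e_0=\ell$). Recalling that $\ell-e_y$ counts the indices $i$ with $y\cdot\bar x_i=1$, and that the $\bar x_i$ are precisely the points on the $W_j$, I would write
\[ \ell-e_y=\sum_{j=1}^{N}|\{w\in W_j : y\cdot w=1\}|. \]
For each $j$, the linear functional $w\mapsto y\cdot w$ on $W_j$ is either identically zero — exactly when $W_j\subseteq y^\perp=\{x:y\cdot x=0\}$ — or surjective onto $\F_2$, in which case its fiber over $1$ is a coset of a hyperplane of $W_j$ and so has size $2^{k-1}$ (and contains no zero vector). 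Hence $\ell-e_y=2^{k-1}m_y$, where $m_y$ is the number of $j$ with $W_j\not\subseteq y^\perp$. The spanning hypothesis forces $m_y\geq 1$: if every $W_j$ lay in $y^\perp$, the members of $T$ would span a proper subspace of $\F_2^{2n}$. Therefore
\[ \nu_2(\ell-e_y)=(k-1)+\nu_2(m_y)\geq k-1\geq \nu_2(N)=\nu_2(\ell), \]
using $\nu_2(N)\leq k-1$. This is precisely condition (ii) of Theorem~\ref{pstmain2}, which then gives perfect state transfer at minimum time $\pi/2^{\nu_2(2\ell)}=\pi/2^{\nu_2(2N)}$.

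It remains to handle connectedness and the final sentence. Since the members of $T$ span $\F_2^{2n}$, the set $\bar S$ generates $(\Z/2\Z)^{2n}$, so by the observation in Section~3 that $S$ generates $G$ iff its image generates $G/Z$, the graph $X=\Cay(G,S)$ is connected (and $S\neq G\setminus\{1\}$ since $z\notin S$). The last assertion is just the specialization $k=n$, $N=2^{n-1}$ with $n\geq 2$: here $\nu_2(N)=n-1=k-1$, so the hypothesis $\nu_2(N)\leq k-1$ holds and the minimum time is $\pi/2^{\nu_2(2^n)}=\pi/2^n$. I do not anticipate a serious obstacle: the only real content is identifying $\ell-e_y$ with $2^{k-1}m_y$ by intersecting the spread members with the hyperplane $y^\perp$, and observing that the spanning assumption is exactly what makes each $m_y$ nonzero; the point to watch is the bookkeeping with $2$-valuations, in particular that $\nu_2(2\ell)=\nu_2(2N)$ and that the inequalities match those in the hypotheses of Theorem~\ref{pstmain2}.
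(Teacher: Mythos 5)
Your proof is correct and follows essentially the same route as the paper: both compute $\ell - e_y$ by intersecting the spread members with the hyperplane $y^\perp$, obtaining $\ell - e_y = 2^{k-1}m_y$ (the paper writes $N-M$ for your $m_y$), and then feed the valuation bound $\nu_2(\ell - e_y) \geq k-1 \geq \nu_2(N) = \nu_2(\ell)$ into Theorem~\ref{pstmain2}(ii). The only detail you leave implicit in the final specialization is that a partial $n$-spread with $N = 2^{n-1} \geq 2$ members automatically spans $\F_2^{2n}$ (any two members already do), which the paper notes explicitly.
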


\begin{proof}
	We prove the statement first when $T$ is a partial $k$-spread with $N$ elements which span $\F_2^{2n}$. Since the points of $T$ are assumed to span $\F_2^{2n}$ then the graph $\Cay(G,S)$ is connected. To see that $\Cay(G,S)$ admits perfect state transfer, let $H$ be the hyperplane of $\F_2^{2n}$ defined by
	\[\{ x \in \F_2^{2n} : y \cdot x =0  \} \]
	for some $y \in \F_2^{2n} \setminus \{0\}$. Each element of $T$ is either contained in $H$ or intersects $H$ in a $(k-1)$-dimensional subspace. Let $M$ be the number of elements of $T$ contained in $H$. Counting the number of points of $\bar{S}$ that are contained in $H$ shows
\begin{equation*}
\begin{split}
	e_y &= M(2^k-1)+(N-M)(2^{k-1}-1)\\
		&= (M+N)2^{k-1} -N.\\
\end{split}
\end{equation*}
It then follows that $\ell - e_y = (N-M)2^{k-1}$, which implies that $\nu_2(\ell - e_y) \geq k-1$. Since $\ell = N(2^k -1)$, then $\nu_2(\ell)=\nu_2(N)\leq k-1$. From Theorem~\ref{pstmain2}, $X$ admits perfect state transfer with minimum time $\pi/2^{\nu_2(2N)}$.

	Lastly, when $T$ is a partial $n$-spread with $N=2^{n-1}$ then the points of $T$ span $\F_2^{2n}$ since $n \geq 2$. Therefore $\Cay(G,S)$ is a connected graph in this case, and the above computations prove that it admits perfect state transfer with minimum time $\pi/ 2^n$. 
\end{proof}
	
Note that the minimum time given in Theorem~\ref{spread} depends only on the number of elements in the spread. For example,  we can take $T$ to be a subset of the \textit{regular spread}, which is defined as follows. The vector spaces $\F_2^{2n}$ and $\F_{2^n}^2$ are isomorphic as $\F_2$-vector spaces. A $1$-dimensional subspace of $\F_{2^n}^2$ maps to an $n$-dimensional subspace of $\F_2^{2n}$, and the $2^{2n}+1$ images of the $1$-dimensional subspaces of $\F_{2^n}^2$ form a collection of  $n$-dimensional subspaces of $\F_2^{2n}$, with any two intersecting in the zero subspace, called the \textit{regular spread}.

\section{Fractional Revival}\label{fracrev}
A detailed description of fractional revival in graphs in association schemes was provided in Chan et al. \cite[Theorems 3.3, 3.5]{CCTVZ}. We can apply those results
to our setting of a normal Cayley graph $\Cay(G,S)$ of an extraspecial $2$-group. We use the notation from section 3.
Let $\alpha = \nu_2( \gcd( \ell - e_y : y \in (\Z/2\Z)^{2n}))$
and define integers $g$ and $h$ by:
	\[ g = \begin{cases}
		\min(2^{\nu_2(2\ell+2)}, 2^{\alpha+2}), & \textnormal{if} \ z \in S, \\
		\min(2^{\nu_2(2\ell)}, 2^{\alpha+2}), & \textnormal{if} \ z \notin S,\end{cases} \qquad  hg = 2^{\alpha+2}. \]
By Theorem~\ref{power2}  $g$ is equal to the integer in Theorem~\ref{pstmain} denoted by $d$ or $c$ according to whether or not $z\in S$.
In this notation, we can state the following special case of \cite[Theorem 3.5]{CCTVZ}. (The times for balanced fractional revival
are read off from \cite[Theorem 3.5]{CCTVZ}.)

\begin{theorem}\label{fracrevesp} Let $X=\Cay(G,S)$. 
 If $z \in S$ then:
\begin{enumerate}[label=(\alph*)]
	\item $\alpha +1 \leq \nu_2(\ell+1)$ if and only if $X$ admits neither perfect state transfer nor fractional revival.
	\item $\alpha \geq \nu_2(\ell+1)$ if and only if $X$ admits perfect state transfer. When $\alpha = \nu_2(\ell+1)$, $X$ admits perfect state transfer but no other form of fractional revival. 
	\item $\alpha \geq \nu_2(\ell+1)+1$ if and only if $X$ admits fractional revival that is different from perfect state transfer. In particular, $\frac{2\pi}{hg}$ is the minimum time when fractional revival occurs in $X$, and $X$ admits balanced fractional revival at time $\frac{\pi}{2g}$.
          \end{enumerate}
	If $z \notin S$ then:
\begin{enumerate}[label=(\roman*)]
	\item $\alpha +1 \leq \nu_2(\ell)$ if and only if $X$ admits neither perfect state transfer nor fractional revival.
	\item $\alpha \geq \nu_2(\ell)$ if and only if $X$ admits perfect state transfer. When $\alpha = \nu_2(\ell)$, $X$ admits perfect state transfer but no other form of fractional revival. 
	\item $\alpha \geq \nu_2(\ell)+1$ if and only if $X$ admits fractional revival that is different from perfect state transfer. In particular, $\frac{2\pi}{hg}$ is the minimum time when fractional revival occurs in $X$, and $X$ admits balanced fractional revival at time $\frac{\pi}{2g}$.
\end{enumerate}	\qed
\end{theorem}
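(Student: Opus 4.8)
The statement is presented as the specialization of \cite[Theorem 3.5]{CCTVZ} --- the classification of fractional revival for a graph in an association scheme --- to $X=\Cay(G,S)$, so the plan is to set up that translation and track the relevant invariants. That general criterion requires the vertex pair to be strongly cospectral with integer eigenvalue support, and then controls fractional revival and perfect state transfer by comparing the $2$-adic valuations $\nu_2(k-\theta)$ for $\theta$ in $\Phi^+$ against those for $\theta$ in $\Phi^-$, where $k$ is the valency; perfect state transfer is the extreme case, and genuine fractional revival occurs precisely when the valuations on $\Phi^+$ beat those on $\Phi^-$ by a margin of at least two.

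First I would pin down the spectral data for the candidate pair. By Theorem~\ref{stronglycospectralcentralinvolution} fractional revival can only occur between $g$ and $gz$, since $z$ is the unique central involution of $G$; by Theorem~\ref{eigensupport} the eigenvalue support of $g$ is all of $\Spec(X)$, which is integral by Theorem~\ref{spec}, so the integrality hypothesis holds automatically; and strong cospectrality of $g$ and $gz$ is controlled by Theorem~\ref{stronglycospectralG}. When it holds, Remark~\ref{eigpartitions} identifies $\Phi_{g,gz}^+$ with the eigenvalues afforded by the linear characters, namely $\{4e_y-2\ell+1 : y\in(\Z/2\Z)^{2n}\}$ when $z\in S$ and $\{4e_y-2\ell : y\in(\Z/2\Z)^{2n}\}$ when $z\notin S$, and $\Phi_{g,gz}^-$ with the single eigenvalue afforded by the nonlinear character, which is $\{-1\}$ when $z\in S$ and $\{0\}$ when $z\notin S$. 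The fact that $\Phi^-$ is always a singleton simplifies the general criterion substantially.

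Next I would carry out the valuation bookkeeping. Here $k=|S|$ is $2\ell+1$ when $z\in S$ and $2\ell$ when $z\notin S$. For an eigenvalue $\theta$ from a linear character one has $k-\theta=4(\ell-e_y)$, so $\min_{\theta\in\Phi^+}\nu_2(k-\theta)=2+\nu_2\big(\gcd_y(\ell-e_y)\big)=\alpha+2$; for the nonlinear eigenvalue $\mu$ one has $k-\mu=2\ell+2$ of valuation $1+\nu_2(\ell+1)$ (resp.\ $k-\mu=2\ell$ of valuation $1+\nu_2(\ell)$). Feeding these into \cite[Theorem 3.5]{CCTVZ} and comparing $\alpha+2$ with $\nu_2(k-\mu)$ yields exactly the trichotomy claimed: $\alpha+2\le\nu_2(k-\mu)$, i.e.\ $\alpha+1\le\nu_2(\ell+1)$ (resp.\ $\nu_2(\ell)$), gives neither phenomenon --- this range also covers the case where $g,gz$ are not strongly cospectral, by Theorem~\ref{stronglycospectralG}; $\alpha+2=\nu_2(k-\mu)+1$, i.e.\ $\alpha=\nu_2(\ell+1)$ (resp.\ $\nu_2(\ell)$), is the boundary of Lemma~\ref{pstz} and Lemma~\ref{pstnoz}, giving perfect state transfer and nothing else; and $\alpha+2\ge\nu_2(k-\mu)+2$, i.e.\ $\alpha\ge\nu_2(\ell+1)+1$ (resp.\ $\nu_2(\ell)+1$), gives genuine fractional revival.

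Finally, for the times: constancy of $e^{i\tau\theta}$ is forced on $\Phi^+$ and, for genuine revival, must be misaligned with the value on $\Phi^-$, which makes $\tau$ a combination of $g=2^{\min(\nu_2(k-\mu),\,\alpha+2)}$ and $hg=2^{\alpha+2}$; the identification of this $g$ with the integer $d$ (or $c$) of Theorem~\ref{pstmain} is precisely Theorem~\ref{power2}, which says $d=2^{\nu_2(2\ell+2)}$ and $c=2^{\nu_2(2\ell)}$ are genuine powers of $2$. Reading the minimum fractional-revival time and the balanced-revival time off \cite[Theorem 3.5]{CCTVZ} then gives $\frac{2\pi}{hg}$ and $\frac{\pi}{2g}$. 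The valuation arithmetic above is routine; the main obstacle is the careful simultaneous matching of the two cases $z\in S$ and $z\notin S$ against the several hypotheses and conclusions of \cite[Theorem 3.5]{CCTVZ}, keeping all the normalizations (working with $k-\theta$ instead of $\theta$, and with $2\ell,2\ell+2$ instead of $\ell,\ell+1$) consistent so that the stated minimal times come out with the right powers of $2$.
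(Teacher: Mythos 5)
Your proposal is correct and takes essentially the same route as the paper, which states this theorem as a direct specialization of \cite[Theorem 3.5]{CCTVZ} with no written proof beyond the citation; your identification of $\Phi_{g,gz}^{\pm}$, the valuation computations $\nu_2(k-\theta)=\alpha+2$ on $\Phi^+$ versus $\nu_2(2\ell+2)$ or $\nu_2(2\ell)$ on $\Phi^-$, and the observation that failure of strong cospectrality falls into the ``neither'' branch all correctly fill in that translation.
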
	
	
By slightly modifying Theorem~\ref{spread} and applying Theorem~\ref{fracrevesp}(iii) we have examples of sets $S \subseteq G \setminus \{1,z\}$ where $G$ is an extraspecial $2$-group and $\Cay(G,S)$ admits balanced fractional revival.
	
\begin{theorem}
	Let $T$ be a partial $k$-spread in $\F_2^{2n}$ with $N$ elements, and assume that the points of $T$ span $\F_2^{2n}$. Let $X=\Cay(G,S)$ with $S= K(x_1) \cup ... \cup K(x_\ell)$ such that $\bar{S}$ is the set of points incident to the elements of $T$. If $\nu_2(N) \leq k-2$, then $X$ is a connected graph which admits balanced fractional revival.
	
	In particular, when $T$ is a partial $n$-spread with $n \geq 3$ and $N=2^{n-2}$ then $X$ is a connected graph which admits balanced fractional revival.  \qed
\end{theorem}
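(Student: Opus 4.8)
The plan is to mimic the proof of Theorem~\ref{spread}, replacing the perfect-state-transfer criterion of Theorem~\ref{pstmain2} by the fractional-revival criterion of Theorem~\ref{fracrevesp}(iii). Since $\bar S$ consists of nonzero points, we are in the case $z\notin S$, so the goal is to verify the hypothesis $\alpha\ge\nu_2(\ell)+1$ of Theorem~\ref{fracrevesp}(iii), where $\alpha=\nu_2\big(\gcd(\ell-e_y:y\in(\Z/2\Z)^{2n})\big)$. Connectedness is immediate: the points of $T$ are assumed to span $\F_2^{2n}$, hence $S$ generates $G$ and $\Cay(G,S)$ is connected.

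The core is the same counting step used for Theorem~\ref{spread}. Fixing $y\in\F_2^{2n}\setminus\{0\}$ and letting $H=\{x:y\cdot x=0\}$ be the associated hyperplane, each $k$-dimensional member of $T$ either lies in $H$ or meets $H$ in a $(k-1)$-dimensional subspace; if $M$ of the members lie in $H$, counting the points of $\bar S$ inside $H$ gives $e_y=M(2^k-1)+(N-M)(2^{k-1}-1)=(M+N)2^{k-1}-N$, and since $\ell=N(2^k-1)$ this yields $\ell-e_y=(N-M)2^{k-1}$. Hence $\nu_2(\ell-e_y)\ge k-1$ for every $y$, so $\alpha\ge k-1$. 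On the other hand $\nu_2(\ell)=\nu_2\big(N(2^k-1)\big)=\nu_2(N)\le k-2$ by hypothesis, so $\nu_2(\ell)+1\le k-1\le\alpha$. Therefore Theorem~\ref{fracrevesp}(iii) (the $z\notin S$ case) applies, and $X$ admits fractional revival different from perfect state transfer, in particular balanced fractional revival, at time $\pi/(2g)$ with $g=\min\!\big(2^{\nu_2(2\ell)},2^{\alpha+2}\big)$.

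For the final assertion I would set $k=n$ and $N=2^{n-2}$ with $n\ge 3$, so that $\nu_2(N)=n-2=k-2$, which meets the hypothesis $\nu_2(N)\le k-2$. Since $N=2^{n-2}\ge 2$ and any two distinct $n$-dimensional subspaces of $\F_2^{2n}$ that intersect only in $0$ already span $\F_2^{2n}$ (as $n+n=2n$), the points of $T$ span $\F_2^{2n}$, and the first part of the theorem applies.

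I do not expect a substantive obstacle here beyond what is already handled for Theorem~\ref{spread}; the one point requiring care is that we need the \emph{strict} inequality $\alpha\ge\nu_2(\ell)+1$ rather than merely $\alpha\ge\nu_2(\ell)$ — the latter only gives perfect state transfer via Theorem~\ref{pstmain2} — and this is exactly why the spread hypothesis is strengthened from $\nu_2(N)\le k-1$ in Theorem~\ref{spread} to $\nu_2(N)\le k-2$ here. It is also worth double-checking that Theorem~\ref{fracrevesp}(iii) asserts \emph{balanced} fractional revival and not just fractional revival, which it does.
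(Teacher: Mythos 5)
Your proof is correct and follows exactly the route the paper intends: the paper gives no separate argument for this theorem, stating only that it follows ``by slightly modifying Theorem~\ref{spread} and applying Theorem~\ref{fracrevesp}(iii),'' and your hyperplane-intersection count giving $\ell-e_y=(N-M)2^{k-1}$, hence $\alpha\geq k-1\geq \nu_2(\ell)+1$, is precisely that modification. Your closing remark correctly identifies why the hypothesis is strengthened from $\nu_2(N)\leq k-1$ to $\nu_2(N)\leq k-2$, namely to upgrade the valuation inequality from the perfect-state-transfer threshold to the strict one required for fractional revival.
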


\section{Uniform Mixing}

	We now turn to uniform mixing in normal Cayley graphs of extraspecial $p$-groups, where $p$ is any prime, not necessarily equal to $2$. For graphs in association schemes, Chan gave a useful characterization. Let $\mathcal A=\{A_0,A_1,...,A_d\}$ be an association scheme. The $\C$-algebra generated by $\mathcal A$ is called the \textit{Bose-Mesner algebra} of $\mathcal A$, and has an alternative basis of orthogonal idempotents, say $\{E_0,...,E_d\}$. Each $A_j$ can be written as a linear combination of the idempotents, say $A_j = \sum_{s=0}^d p_j(s)E_s$, and from the orthogonality of the idempotents it follows that the columns of the idempotents $E_s$ are eigenvectors of $A_j$ with eigenvalue $p_j(s)$.
	
	Recall that if $X$ is a graph on $n$ vertices in the association scheme $\mathcal A$ with adjacency matrix $A$, then $X$ admits instantaneous uniform mixing at time $\tau$ if 
\[ |U_A(\tau)|_{u,v} = \frac{1}{\sqrt{n}} \]
	for all vertices $u$ and $v$ of $X$. This is equivalent to the matrix $\sqrt{n}U_A(\tau)$ being a complex Hadamard matrix. Thus a necessary condition for instantaneous uniform mixing is that the Bose-Mesner algebra of $\mathcal A$ contains a complex Hadamard matrix. Chan gave the following result for association schemes containing a complex Hadamard matrix \cite[Proposition 2.2]{C}. 
	
\begin{theorem} \label{mixingbound}
	If the Bose-Mesner algebra of $\mathcal A$ contains a complex Hadamard matrix, then 
	\[ n \leq \Big( \sum_{j=0}^d |p_j(s)| \Big)^2. \]
\end{theorem}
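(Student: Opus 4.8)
The plan is to play the two natural bases of the Bose-Mesner algebra against each other. Suppose $W$ is a complex Hadamard matrix lying in the Bose-Mesner algebra of $\mathcal{A}$, and expand it in both the basis of relation matrices and the basis of primitive idempotents:
\[ W = \sum_{j=0}^d w_j A_j = \sum_{s=0}^d \widehat{w}_s E_s, \qquad \widehat{w}_s = \sum_{j=0}^d p_j(s)\, w_j, \]
where the second relation is forced by $A_j = \sum_s p_j(s) E_s$.

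First I would pin down the two sets of coefficients. Since the relations $R_0,\dots,R_d$ partition the set of entry positions, every matrix in the Bose-Mesner algebra is constant on each relation; in particular $W$ takes the value $w_j$ on all positions of $R_j$. Because $W$ is a complex Hadamard matrix, every entry has modulus $1$, and therefore $|w_j| = 1$ for each $j$. On the spectral side, the $E_s$ are Hermitian, pairwise orthogonal idempotents with $\sum_s E_s = I$, so $W^* = \sum_s \overline{\widehat{w}_s}\, E_s$ and hence $W W^* = \sum_{s=0}^d |\widehat{w}_s|^2 E_s$. Comparing this with the Hadamard identity $W W^* = nI = n\sum_{s=0}^d E_s$ gives $|\widehat{w}_s|^2 = n$, that is, $|\widehat{w}_s| = \sqrt{n}$, for every $s$.

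The bound now follows from the triangle inequality applied to $\widehat{w}_s = \sum_{j=0}^d p_j(s)\, w_j$: for each $s$,
\[ \sqrt{n} = |\widehat{w}_s| \le \sum_{j=0}^d |p_j(s)|\,|w_j| = \sum_{j=0}^d |p_j(s)|, \]
and squaring yields $n \le \big(\sum_{j=0}^d |p_j(s)|\big)^2$, as claimed. There is no real computational obstacle here; the points that require care are the two structural facts invoked along the way — that membership in the Bose-Mesner algebra makes $W$ constant on the relations (so that $|w_j| = 1$), and that the primitive idempotents of the scheme may be taken to be Hermitian (so that $W W^* = nI$ translates into $|\widehat{w}_s| = \sqrt{n}$ rather than something weaker). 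Both are standard, and once they are in place the estimate is immediate.
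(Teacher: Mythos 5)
Your argument is correct, and it is the standard proof of this inequality: expand the Hadamard matrix $W$ in the two bases, get $|w_j|=1$ from the entrywise condition and $|\widehat{w}_s|=\sqrt{n}$ from $WW^*=nI$ together with the Hermitian orthogonal idempotents, then apply the triangle inequality. The paper itself does not prove this statement --- it quotes it from Chan \cite[Proposition 2.2]{C} --- and your proof is essentially the one given there, so there is nothing to add beyond noting that the conjugacy-class scheme here is commutative (though not necessarily symmetric), which is exactly what guarantees the $E_s$ are Hermitian as you require.
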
	

We can use this to give a necessary condition for instantaneous uniform mixing. Let $G$ be any finite group of order $n$, with conjugacy classes $K(g_0),K(g_1),...,K(g_d)$.
The columns of the idempotents $E_\chi$ are eigenvectors of $A_j$ with eigenvalues $\omega_\chi^j:=|K(g_j)|\chi(g_j)/\chi(1)$, so we can write $A_j$ as 
	\[ A_j =\sum_{\chi \in \Irr(G)} \omega_\chi^jE_\chi. \]
	We then deduce the following result.	
\begin{theorem}\label{mixingchartable}
	Let $G$ be a group of order $n$. If $\Cay(G,S)$ admits instantaneous uniform mixing, then the support of each $\chi \in \Irr(G)$ contains at least $\sqrt{n}$ elements of $G$.
\end{theorem}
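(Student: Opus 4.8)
The plan is to apply Theorem~\ref{mixingbound} to the association scheme $\mathcal A$ coming from the conjugacy classes of $G$, using the explicit description of the eigenvalues $p_j(s)$ in terms of the character table. From the discussion preceding the statement, with $A_j = \sum_{\chi \in \Irr(G)} \omega_\chi^j E_\chi$ where $\omega_\chi^j = |K(g_j)|\chi(g_j)/\chi(1)$, the value $p_j(\chi)$ is exactly $\omega_\chi^j$. So if $\Cay(G,S)$ admits instantaneous uniform mixing, then $\sqrt{n}\,U_A(\tau)$ is a complex Hadamard matrix lying in the Bose--Mesner algebra, and Theorem~\ref{mixingbound} gives, for \emph{each} fixed irreducible character $\chi$,
\[ n \leq \Big( \sum_{j=0}^d |\omega_\chi^j| \Big)^2 = \Big( \sum_{j=0}^d |K(g_j)|\,\frac{|\chi(g_j)|}{\chi(1)} \Big)^2. \]

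The next step is to bound the right-hand side. Write $\sum_{j=0}^d |K(g_j)|\,|\chi(g_j)|/\chi(1)$ as $\frac{1}{\chi(1)}\sum_{g \in G} |\chi(g)|$, since summing over conjugacy class representatives with multiplicity $|K(g_j)|$ is the same as summing over all group elements. I would then apply the Cauchy--Schwarz inequality to $\sum_{g \in G}|\chi(g)| = \sum_{g \in \Supp(\chi)} |\chi(g)| \cdot 1$, obtaining
\[ \sum_{g \in G} |\chi(g)| \leq \Big( \sum_{g \in G} |\chi(g)|^2 \Big)^{1/2} |\Supp(\chi)|^{1/2} = |G|^{1/2}\,|\Supp(\chi)|^{1/2}, \]
where the orthogonality relation $\sum_{g \in G}|\chi(g)|^2 = |G|$ is used. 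Combining this with the displayed inequality from Theorem~\ref{mixingbound} and $n = |G|$ gives
\[ |G| \leq \frac{1}{\chi(1)^2}\,|G|\,|\Supp(\chi)|, \]
hence $\chi(1)^2 \leq |\Supp(\chi)|$, and in particular $|\Supp(\chi)| \geq \chi(1) \geq 1$; but to get the claimed bound $|\Supp(\chi)| \geq \sqrt{n}$ I need a little more.

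To close the gap, observe that $|\Supp(\chi)| \geq \chi(1)^2$ already, and separately one has the elementary bound relating the support to the degree: the sum $\sum_{g \in \Supp(\chi)} |\chi(g)|^2 = |G|$ together with $|\chi(g)| \le \chi(1)$ for all $g$ gives $|G| \le \chi(1)^2 |\Supp(\chi)|$, i.e. $|\Supp(\chi)| \ge |G|/\chi(1)^2 = n/\chi(1)^2$. Multiplying the two bounds $|\Supp(\chi)| \ge \chi(1)^2$ and $|\Supp(\chi)| \ge n/\chi(1)^2$ yields $|\Supp(\chi)|^2 \ge n$, that is $|\Supp(\chi)| \ge \sqrt{n}$, as desired. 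The main obstacle is the middle step: correctly extracting from Chan's bound (Theorem~\ref{mixingbound}) the per-character inequality $\chi(1)^2 \le |\Supp(\chi)|$ via Cauchy--Schwarz and the column orthogonality relations, and then recognizing that this must be combined with the ``free'' support bound $|\Supp(\chi)| \ge n/\chi(1)^2$ rather than used on its own. I would write these two inequalities out cleanly, multiply, and take square roots to finish.
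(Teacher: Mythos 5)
Your proof is correct, but it takes a genuinely longer route than the paper's. Both arguments begin identically: apply Theorem~\ref{mixingbound} to the idempotent $E_\chi$ to obtain $\sqrt{n}\le \sum_{j}|K(g_j)|\,|\chi(g_j)|/\chi(1)$. At that point the paper simply notes that each summand is at most $|K(g_j)|$ (since $|\chi(g_j)|\le\chi(1)$) and that the summands with $\chi(g_j)=0$ vanish, so the whole sum is at most $\sum_{j:\chi(g_j)\neq 0}|K(g_j)|=|\Supp(\chi)|$, finishing in one line. You instead rewrite the sum as $\tfrac{1}{\chi(1)}\sum_{g\in G}|\chi(g)|$, apply Cauchy--Schwarz together with the orthogonality relation $\sum_{g\in G}|\chi(g)|^2=|G|$ to extract $|\Supp(\chi)|\ge\chi(1)^2$, and then multiply by the separate, mixing-independent bound $|\Supp(\chi)|\ge n/\chi(1)^2$ (from $|G|=\sum_{g\in\Supp(\chi)}|\chi(g)|^2\le\chi(1)^2|\Supp(\chi)|$). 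Every step checks out, and the product of the two inequalities does yield $|\Supp(\chi)|^2\ge n$. What your detour buys is the standalone intermediate inequality $|\Supp(\chi)|\ge\chi(1)^2$ as a consequence of uniform mixing, which for the extraspecial application is even more decisive than $\sqrt{n}$: the nonlinear character has $\chi(1)^2=p^{2n}$ but support of size only $p$. What it costs is that Cauchy--Schwarz, column orthogonality, and the combination of two bounds are all unnecessary for the stated theorem; the termwise estimate $|\chi(g_j)|\le\chi(1)$ already closes the argument.
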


\begin{proof}
	Suppose $\Cay(G,S)$ admits instantaneous uniform mixing. If $J$ is the set of indices $j$ such that $\chi(g_j) \neq 0$, then from Theorem~\ref{mixingbound} we have:
\begin{equation*}
	\begin{split}
		\sqrt{n} & \leq \sum_{j \in J} \frac{|K(g_j)||\chi(g_j)|}{\chi(1)}  \\
		& \leq \sum_{j \in J} |K(g_j)| \\
		&= |\Supp(\chi)|, \\
	\end{split}
\end{equation*}
	since $|\chi(g_i)| \leq \chi(1)$.
\end{proof}

Note that Theorem~\ref{mixingchartable} gives a necessary condition for instantaneous uniform mixing which depends only on the character table of $G$, and not on the connection set
$S$.  In the abelian case, there are Cayley graphs on abelian $2$-groups  and
elementary abelian $3$-groups (and direct products of these) that admit instantaneous
uniform mixing, as shown in  \cite{C} and  \cite{CF}. 

We have the following result. 

\begin{corollary}\label{mixing}
	There is no normal Cayley graph of an extraspecial $p$-group which admits instantaneous uniform mixing. 
\end{corollary}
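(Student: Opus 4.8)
The plan is to apply Theorem~\ref{mixingchartable}: it suffices to show that an extraspecial $p$-group $G$ of order $p^{2n+1}$ has some irreducible character whose support is too small, namely strictly smaller than $\sqrt{|G|} = p^{n+1/2}$. The natural candidate is a nonlinear irreducible character $\psi_\lambda$, since these vanish off the center $Z$. First I would recall from the character description given in section~3 that each such $\psi_\lambda$ has $\psi_\lambda(g) = 0$ for $g \notin Z$ and $\psi_\lambda(g) = p^n\lambda(g)$ for $g \in Z$, so $\Supp(\psi_\lambda) = Z$ and hence $|\Supp(\psi_\lambda)| = |Z| = p$.

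Next I would compare this with the bound from Theorem~\ref{mixingchartable}. If $\Cay(G,S)$ admitted instantaneous uniform mixing, that theorem would force $|\Supp(\psi_\lambda)| \geq \sqrt{|G|} = p^{n+1/2}$, i.e. $p \geq p^{n+1/2}$, which is impossible for any $n \geq 1$ and any prime $p$ since then $p^{n+1/2} \geq p^{3/2} > p$. This immediate contradiction establishes the corollary.

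I would present this as a short, direct argument: invoke the classification of irreducible characters of extraspecial $p$-groups stated earlier, observe that a nonlinear irreducible character is supported only on the center $Z$ of order $p$, and note $p < p^{n+1/2} = \sqrt{p^{2n+1}} = \sqrt{|G|}$, contradicting Theorem~\ref{mixingchartable}. Since every extraspecial $p$-group does possess nonlinear irreducible characters (for instance the unique one $\psi$ of equation~(\ref{nonlinearchar}) in the $p=2$ case, or any $\psi_\lambda$ with $\lambda$ nonprincipal in general), the argument applies uniformly to all extraspecial $p$-groups regardless of $p$ or the isomorphism type.

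There is essentially no obstacle here: the work was already done in proving Theorem~\ref{mixingchartable}, and the only new ingredient is the elementary observation that nonlinear irreducible characters of extraspecial groups have support of size exactly $p$, which is far below the $\sqrt{|G|}$ threshold. The only point requiring a word of care is confirming that a nonlinear irreducible character always exists, but this is immediate from the standard character theory of extraspecial $p$-groups recalled in section~3 (there are $p-1$ of them, indexed by the nonprincipal characters of $Z$).
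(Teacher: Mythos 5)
Your proposal is correct and is essentially identical to the paper's own proof: both identify a nonlinear irreducible character, note that it is supported exactly on the center $Z$ of order $p < \sqrt{p^{2n+1}}$, and conclude by Theorem~\ref{mixingchartable}. No further comment is needed.
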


\begin{proof}
	If $G$ is an extraspecial $p$-group of order $p^{2n+1}$ with center $Z$, then the nonlinear characters of $G$ vanish on $G\setminus Z$ and hence have supports of size $p < \sqrt{p^{2n+1}}$. Therefore Theorem~\ref{mixingchartable} implies that no normal Cayley graph of $G$ admits instantaneous uniform mixing. 
\end{proof}

\section{Conclusion}

Previous work on continuous-time quantum walks on Cayley graphs has mainly focused on the case where the group is abelian. In this paper we used known results on graphs in association schemes to determine when the quantum walks on Cayley graphs of extraspecial $2$-groups admit various phenomena. The choice to study the extraspecial $2$-groups is a natural one since previous results showed that perfect state transfer may only occur in a Cayley graph of a group containing a central involution, and each extraspecial $2$-group contains a \textit{unique} central involution. 

After giving precise conditions for perfect state transfer in these graphs, we saw that taking a connecting set $S$ such that $z \in S$ and $\ell$ even, or $z \notin S$ and $\ell$ odd are two particularly easy choices for $\Cay(G,S)$ to admit perfect state transfer, and we provided explicit examples of graphs admitting perfect state transfer when $z \notin S$ and $\ell$ is even by using partial spreads. We also gave precise conditions for the different forms of fractional revival, and showed that there are graphs from partial spreads which admit balanced fractional revival.   
		
We also remark that Cayley graphs of the group $(\Z/2\Z)^n$, known as \textit{cubelike graphs}, have received much attention previously \cite{BGS,CG,TFC}. As the quotient of an extraspecial group of order $2^{2n+1}$ by its center is isomorphic to $(\Z/2\Z)^{2n}$, it is interesting to compare the quantum walks on the Cayley graphs of these groups. For example, it was proven in \cite[Theorem 1]{BGS} that $\Cay((\Z/2\Z)^d,S)$ admits perfect state transfer at time $\pi/2$ when the sum of the elements of $S$ is not $0$. When this sum is $0$, Cheung and Godsil gave conditions for perfect state transfer in \cite[Theorem 4.1]{CG}, although our main result in Theorem~\ref{pstmain2} appears to be simpler. On the other hand, it is known that there are Cayley graphs of $(\Z/2\Z)^d$ and $(\Z/3\Z)^d$ which admit instantaneous uniform mixing. This starkly contrasts with our results in Corollary~\ref{mixing}. 

There are a few questions that we think would be interesting to investigate further:
\begin{enumerate}
\item Throughout the paper we only considered normal Cayley graphs, where the connecting set $S$ is a union of conjugacy classes. Are there similar results with more general connecting sets that are not unions of conjugacy classes? Perfect state transfer on non-normal Cayley graphs of dihedral groups has been studied in \cite{CCL}.
  
\item A finite $p$-group is called a \textit{special $p$-group} if its center, Frattini subgroup, and derived subgroup are all equal and elementary abelian. As the name suggests, the extraspecial $p$-groups are also special $p$-groups. Can the results presented here be generalized to Cayley graphs of special groups? Special groups which may be interesting to consider include the Heisenberg groups over finite fields. 
\end{enumerate}

\section*{Acknowledgements}
The authors would like to thank Ada Chan, Chris Godsil, Harmony Zhan, and Soffia Arnadottir for their helpful conversations on quantum walks, and for reading an early draft of this paper. Special thanks to Ada Chan for pointing out an important error in the first version as well as the connection between perfect state transfer in the graph and its complement, and to Chris Godsil for bringing the question of uniform mixing on Cayley graphs of extraspecial $p$-groups for odd $p$ to our attention. We also thank the anonymous referees
for their helpful comments and suggestions on the exposition. 

\end{document}